\documentclass{article}
\usepackage{latexsym,amssymb,amsmath,amsfonts,pictex,enumerate,amsthm,dsfont}
\usepackage{ mathrsfs }
\usepackage{tikz}
\usetikzlibrary{decorations.markings}
\usepackage{scalerel,stackengine}
\usepackage[active]{srcltx}

\textwidth=17.00cm
\textheight=25.00cm
\topmargin=-2.00cm
\oddsidemargin=-0.25cm
\evensidemargin=-0.25cm
\headheight=0.3cm
\headsep=0.5cm
\linespread{1.5}

\theoremstyle{definition}
\newtheorem{theorem}{Theorem}

\newtheorem{lemma}[theorem]{Lemma}

\stackMath
\newcommand\reallywidehat[1]{
\savestack{\tmpbox}{\stretchto{
  \scaleto{
    \scalerel*[\widthof{\ensuremath{#1}}]{\kern-.6pt\bigwedge\kern-.6pt}
    {\rule[-\textheight/2]{1ex}{\textheight}}
  }{\textheight}
}{0.5ex}}
\stackon[1pt]{#1}{\tmpbox}
}
\parskip 1ex

\title{Bounded point derivations on Campanato spaces}
\author{Evan Abshire\thanks{Email: abshire17@marshall.edu} and Stephen Deterding\thanks{Email: deterding@marshall.edu}}
\date{Marshall University, 1 John Marshall Dr. Huntington WV, 25755 USA}

\begin{document}

\maketitle

\begin{abstract}
    Let $X$ be a compact subset of the complex plane and $x \in X$. A necessary and sufficient condition is given in terms of Hausdorff contents for the existence of a bounded point derivation at $x$ on the space of vanishing Campanato functions that are analytic in a neighborhood of $X$. This generalizes many known conditions for the existence of bounded point derivations on other function spaces. 
\end{abstract}

\section{Introduction}

This paper concerns questions about the smoothness of functions at boundary points of subsets of the complex plane. Let $X$ be a compact subset of the complex plane and let $R(X)$ denote the uniform closure of rational functions with poles off $X$. $R(X)$ is widely studied in the theory of complex approximation; for example, Runge's theorem states that if $A(X)$ denotes the space of functions that are analytic in a neighborhood of $X$, then $R(X) = A(X)$; that is, every analytic function on $X$ can be uniformly approximated by rational functions with poles off $X$. Every function in $R(X)$ is differentiable at an interior point of $X$, but in general, the functions in $R(X)$ are not differentiable at the boundary points of $X$; however, in many cases the functions in $R(X)$ possess a greater degree of smoothness than what otherwise would be expected. 

\bigskip

One such example is that $R(X)$ may admit a bounded point derivation at a boundary point $x$. For a non-negative integer $t$, we say that $R(X)$ admits a $t$-th order bounded point derivation at $x$ if there exists a constant $C>0$ such that

\begin{align*}
    |f^{(t)}(x)| \leq C ||f||_{\infty}
\end{align*}

\bigskip
\noindent for all rational functions $f$ with poles off $X$. Here $||\cdot||_{\infty}$ denotes the uniform norm on $X$.

\bigskip

Bounded point derivations play an important role in the theory of rational approximation. Suppose that $\{f_j\}$ is a sequence of rational functions with poles off $X$ that converges to a limit function $f$ on $X$. If $x$ is an interior point of $X$ then the sequence of derivatives $\{f_j'(x)\}$ converges uniformly to $f'(x)$; however, if $x$ is a boundary point then the sequence of derivatives might not converge at all. Nevertheless, if $R(X)$ admits a bounded point derivation at $x$, then the sequence of derivatives will converge and one can define a derivative for $f$ at $x$ as $f'(x) = \displaystyle \lim_{j\to \infty} f_j'(x)$.

\bigskip 

Bounded point derivations can be defined for other spaces of functions as well. Let $X$ be a compact subset of $\mathbb{C}$ and let $U$ be an open subset of $\mathbb{C}$. Some spaces on which bounded point derivations have been studied include $A_{\alpha}(U)$, the space of little Lipschitz functions of order $\alpha$ that are analytic on $U$ \cite{Lord}, $A_0(X)$, the space of VMO functions that are analytic on $X$ \cite{Deterding}, and $A^s(U)$, the functions on $U$ in the small negative Lipschitz space that are analytic on $U$ \cite{O'Farrell2018}. The focus of this paper is on bounded point derivations on Campanato spaces. Campanato spaces include spaces of Lipschitz functions, functions of bounded mean oscillation, and functions in the negative Lipschitz space as special cases and thus can be used to generalize these results.

\bigskip

It is conjectured that for each space there are necessary and sufficient conditions for the existence of bounded point derivations given in terms of an appropriate capacity. Since there is no general theory, the conditions must be verified on a case by case basis; however, since the Campanato spaces contain $A_{\alpha}(U)$, $A_0(X)$, and $A^s(U)$ determining the conditions for the existence of bounded point derivations in the space of analytic functions on the Campanato spaces verifies the conditions for these other spaces as well. Proving the following theorem is thus the principal focus of this paper. (See Section 2 for relevant definitions.)

\begin{theorem}
\label{main}
    Suppose $t$ is a non-negative integer, $1 \leq p < \infty$, and let $\lambda \geq 0$ also satisfy $2-p < \lambda < 2+p$. Let $X$ be a compact subset of $\mathbb{C}$ and let $A_{p, \lambda}(X)$ denote the space of functions in the vanishing Campanato space $V\mathscr{L}^{p, \lambda}(\mathbb{C})$ that are also analytic in a neighborhood of $X$. Let $A_n(x)$ denote the annulus $\{z: 2^{-(n+1)} \leq |z-x| \leq 2^{-n}\}$. Then $A_{p, \lambda}(X)$ admits a $t$-th order bounded point derivation at $x$ if and only if

\begin{align*}
    \sum_{n=1}^{\infty} 2^{(t+1)n} M_*^{1+\frac{\lambda-2}{p}}(A_n(x) \setminus X) < \infty,
\end{align*}

\bigskip
\noindent where $M_*^{1+\frac{\lambda-2}{p}}$ denotes lower $(1+\frac{\lambda-2}{p})$-dimensional Hausdorff content.
\end{theorem}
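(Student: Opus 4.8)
\emph{The plan.} Recall that $A_{p,\lambda}(X)$ admitting a $t$-th order bounded point derivation at $x$ means exactly that the functional $L_t\colon f\mapsto f^{(t)}(x)$ is bounded on $A_{p,\lambda}(X)$ in the $V\mathscr{L}^{p,\lambda}(\mathbb{C})$ norm, so the goal is to characterize this boundedness. After translating, assume $x=0$. If $f$ is analytic on an open neighbourhood $U$ of $X$ and $\varphi\in C^\infty_c(U)$ equals $1$ near $X$, then the Cauchy--Pompeiu formula applied to $\varphi f$ gives
\begin{align*}
 f^{(t)}(0)=-\frac{t!}{\pi}\int\frac{f(\zeta)\,\bar\partial\varphi(\zeta)}{\zeta^{\,t+1}}\,dA(\zeta),
\end{align*}
with $\bar\partial\varphi$ supported in $\mathbb{C}\setminus X$. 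On $A_n(0)$ one has $|\zeta|^{-(t+1)}\approx 2^{(t+1)n}$, and the portion of $\mathbb{C}\setminus X$ inside $A_n(0)$ is precisely $A_n(0)\setminus X$; so everything is governed by a dyadic decomposition over the annuli $A_n(0)$, with the $n$-th term of the series the expected size of the scale-$2^{-n}$ contribution once the cutoff is made to hug $X$ as tightly as the gap $A_n(0)\setminus X$ permits. That tightening is carried out, scale by scale, with a Vitushkin-type localization operator $T_{\psi_n}$. I would first assemble the analytic background this needs: mapping properties of $T_{\psi_n}$ and of the Cauchy transform on $V\mathscr{L}^{p,\lambda}$, and a comparison between the analytic capacity naturally attached to $L_t$ on $\mathscr{L}^{p,\lambda}$ and the content $M_*^{\,1+(\lambda-2)/p}$ in the summed-over-annuli form the theorem requires. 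The hypothesis $2-p<\lambda<2+p$ is exactly what places the exponent $d:=1+\tfrac{\lambda-2}{p}$ in the interval $(0,2)$, the meaningful range for Hausdorff content on planar sets; outside it the stated condition degenerates.

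\emph{Sufficiency.} Suppose the series converges and let $f\in A_{p,\lambda}(X)$. Fix a smooth partition of unity $\{\psi_n\}$ adapted to the dyadic annuli at $0$, with $\psi_n$ supported near $A_n(0)$, $|\bar\partial\psi_n|\lesssim 2^{n}$, and $\sum_n\psi_n\equiv 1$ on a punctured neighbourhood of $0$. Since $0\notin\operatorname{supp}\psi_n$ and $f$ is analytic near $0$, each localization $T_{\psi_n}f$ is analytic at $0$ and off a compact subset of a fixed dilate of $A_n(0)\setminus X$, and $f^{(t)}(0)=\sum_n(T_{\psi_n}f)^{(t)}(0)$. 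Replacing $f$ by $f-a_n$, where $a_n$ is the mean of $f$ over $B(0,2^{-n})$, leaves $(T_{\psi_n}f)^{(t)}(0)$ unchanged while letting the Campanato norm control $f-a_n$ at scale $2^{-n}$. The crux is the single-scale bound
\begin{align*}
 \bigl|(T_{\psi_n}f)^{(t)}(0)\bigr|\le C\,2^{(t+1)n}\,\|f\|_{V\mathscr{L}^{p,\lambda}}\,M_*^{\,d}\bigl(A_n(0)\setminus X\bigr),
\end{align*}
obtained by writing $(T_{\psi_n}f)^{(t)}(0)$ as a Cauchy integral over the singular set of $T_{\psi_n}f$, pulling out the factor $2^{(t+1)n}$ from $|\zeta|^{-(t+1)}$, estimating the remainder by H\"older's inequality and the Campanato oscillation of $f$ on balls of radius $\approx 2^{-n}$, and invoking the capacity--content comparison to replace the extremal cutoff adapted to $A_n(0)\setminus X$ by its content; it is in making the scale-dependent factors cancel down to $2^{(t+1)n}$ that the exponent $d$ is pinned. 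Summing over $n$ and using the hypothesis gives $|f^{(t)}(0)|\le C\,\|f\|_{V\mathscr{L}^{p,\lambda}}\sum_n 2^{(t+1)n}M_*^{\,d}(A_n(0)\setminus X)<\infty$, so $L_t$ is bounded.

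\emph{Necessity.} Assume $L_t$ is bounded with norm $C$. For a finite $N\subseteq\mathbb{N}$ put $g_N=\sum_{n\in N}c_n\,\widehat{\nu_n}$, where $\widehat{\nu_n}$ is the Cauchy transform of a positive measure $\nu_n$ supported on a compact subset of $A_n(0)\setminus X$ contained in a single angular sector (so that $\zeta^{-(t+1)}$ has essentially constant argument there), with growth $\nu_n(B(z,r))\lesssim r^{\,d}$ and total mass comparable to $M_*^{\,d}(A_n(0)\setminus X)$. The existence of such $\nu_n$ is a Frostman-type statement for lower $d$-dimensional content, and restricting to a sector costs only a factor depending on $t$. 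The growth of $\nu_n$ together with the vanishing nature of $V\mathscr{L}^{p,\lambda}$ makes each $\widehat{\nu_n}$ a member of $A_{p,\lambda}(X)$ with $\|\widehat{\nu_n}\|_{V\mathscr{L}^{p,\lambda}}\le C'$, and the dyadic separation of the supports gives $\|g_N\|_{V\mathscr{L}^{p,\lambda}}\le C''$ uniformly in $N$. Choosing unimodular constants $c_n$ to align the phases of $\widehat{\nu_n}^{(t)}(0)=t!\int\zeta^{-(t+1)}\,d\nu_n(\zeta)$, whose modulus is $\approx 2^{(t+1)n}\|\nu_n\|\approx 2^{(t+1)n}M_*^{\,d}(A_n(0)\setminus X)$, the inequality $|g_N^{(t)}(0)|\le C\,\|g_N\|_{V\mathscr{L}^{p,\lambda}}$ yields $\sum_{n\in N}2^{(t+1)n}M_*^{\,d}(A_n(0)\setminus X)\le C\,C''$ for every finite $N$; letting $N\uparrow\mathbb{N}$ gives convergence of the series.

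\emph{Where the difficulty lies.} I expect the decisive step to be the single-scale estimate in the sufficiency direction, and within it the comparison between the capacity attached to $L_t$ on $\mathscr{L}^{p,\lambda}$ and lower $d$-dimensional content in exactly the summed-over-annuli form used above: one must show that the extremal cutoff adapted to the gap $A_n(0)\setminus X$ is controlled by $M_*^{\,d}$ and not by a coarser content, and this is where the geometry of $\mathbb{C}\setminus X$ meets the Campanato norm and the value $d=1+(\lambda-2)/p$ is forced. A secondary difficulty is the Frostman-type construction for \emph{lower} content and the verification that the Cauchy transforms $\widehat{\nu_n}$ genuinely lie in the \emph{vanishing} Campanato space with norms bounded uniformly in $N$; both should follow the pattern of the earlier treatments of $A_\alpha(U)$, $A_0(X)$, and $A^s(U)$, now carried out uniformly in $p$ and $\lambda$.
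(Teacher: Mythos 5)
Your outline reproduces the overall architecture of the paper's argument (dyadic annular decomposition and a localized Cauchy integral for sufficiency; Frostman measures, Cauchy transforms with the phase factor, and a uniform norm bound for necessity), but at both decisive points it substitutes an unproven black box for the step that actually carries the theorem. In the sufficiency direction, your ``single-scale bound'' is asserted via a ``capacity--content comparison'' that you never formulate, and your sketch of it is off in a way that matters: you estimate the oscillation of $f$ ``on balls of radius $\approx 2^{-n}$,'' whereas the actual mechanism covers the singular set $K_n\subseteq A_n\setminus X$ by dyadic squares $Q_j$ of \emph{arbitrarily small} side $r_j$, uses a Harvey--Polking partition of unity with $\|\nabla\phi_j\|_\infty\le Cr_j^{-1}$, subtracts the mean $f_{Q_j^*}$ (exploiting $\int_{Q_j^*}(z-x)^{-(t+1)}\bar\partial\phi_j\,dA=0$), and arrives at $\sum_j r_j^{1+(\lambda-2)/p}\,\Omega_f^{p,\lambda}(\tfrac32 r_j)$. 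The whole point is then that $h(r)=r^{1+(\lambda-2)/p}\Omega_f^{p,\lambda}(\tfrac32 r)$ is an admissible measure function for the \emph{lower} content $M_*^{1+(\lambda-2)/p}$ precisely because $f$ lies in the \emph{vanishing} Campanato space, so that $r^{-(1+(\lambda-2)/p)}h(r)\to0$. Without this observation you only reach the ordinary content $M^{1+(\lambda-2)/p}$, not $M_*$, and the theorem as stated (and the role of $V\mathscr{L}^{p,\lambda}$ rather than $\mathscr{L}^{p,\lambda}$) is lost. This is the missing idea, not a routine comparison.

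In the necessity direction, the claim that ``the dyadic separation of the supports gives $\|g_N\|_{V\mathscr{L}^{p,\lambda}}\le C''$ uniformly in $N$'' is exactly the content of the paper's Lemma \ref{lem1}: one must estimate, for a ball $B$ in an annulus $A_k$, the contributions of all $\widehat{\nu_n}$ with $k\neq n-1,n,n+1$ to both $\int_B|f_n|^p$ and $\int_B|f_{n,B}|^p$, and these sums are controlled only after damping by a sequence $\epsilon_n\to0$ chosen so that $\sum 2^{n(t+1)}\epsilon_nM_*^{d}(A_n\setminus X)$ still diverges while each term is $\le1$; the paper then works with blocks $g_m=\sum_{n=m}^{M}f_n$ whose norms tend to $0$ while $g_m^{(t)}(0)$ stays bounded below. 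Your finite-$N$ phase-alignment variant could in principle get by with mere uniform boundedness, but you give no proof of it, and your appeal to ``the vanishing nature of $V\mathscr{L}^{p,\lambda}$'' to place $\widehat{\nu_n}$ in the vanishing space is circular. You also omit two points the paper must handle explicitly: all of the Cauchy-transform estimates (H\"older with $q=p/(p-1)$ and $\int_B|\zeta-z|^{-p}\,dA<\infty$) require $p<2$, and the case $p\ge2$ has to be reduced to $p<2$ via the Campanato embedding $\mathscr{L}^{p,\lambda}=\mathscr{L}^{p_1,\lambda_1}$ when $\frac{\lambda-2}{p}=\frac{\lambda_1-2}{p_1}$. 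As it stands, the proposal is a correct table of contents for the proof rather than a proof.
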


Theorem \ref{main} is similar to other existence theorems for bounded point derivations. Lord and O'Farrell \cite[Theorem 1.2]{Lord} proved the following theorem for the case of Lipschitz approximation. 

\begin{theorem}
\label{Lord}
Suppose $U \subseteq \mathbb{C}$ is bounded and open,  $0< \alpha<1$ and $t$ is a non-negative integer. Let $A_{\alpha}(U)$ denote the space of functions in the little Lipshitz class of order $\alpha$ that are analytic on $U$, $x \in \partial U$, and let $A_n(x)$ denote the annulus $\{z: 2^{-(n+1)}\leq |z-x|\leq2^{-n}\}$. Then $A_{\alpha}(U)$ admits a $t$-th order bounded point derivation at $x$ if and only if 

\begin{equation*}
    \sum_{n=1}^{\infty} 2^{(t+1)n} M_*^{1+\alpha}(A_n(x) \setminus U) < \infty,
\end{equation*}

\bigskip

\noindent where $M_*^{1+\alpha}$ denotes lower $(1+\alpha)$-dimensional Hausdorff content.

\end{theorem}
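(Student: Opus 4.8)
The plan is to follow the duality-and-localization scheme behind Theorem~\ref{Lord} and its $\mathrm{VMO}$ counterpart in \cite{Deterding}, now carried over to the full Campanato range of Theorem~\ref{main} (Theorem~\ref{Lord} being the case $\lambda=2+\alpha p$, which drops out of the argument). Normalize $x=0$ and set $d=1+\frac{\lambda-2}{p}$, so that the hypothesis $2-p<\lambda<2+p$ is exactly $0<d<2$ and $\lambda=2+(d-1)p$. Since $0\in X$ and $f\in A_{p,\lambda}(X)$ is analytic on a neighbourhood of $X$, Cauchy's formula represents $f^{(t)}(0)$ as a contour integral of $w^{-(t+1)}f(w)$, so the task is to decide when $f\mapsto f^{(t)}(0)$ is bounded in the seminorm of $V\mathscr{L}^{p,\lambda}(\mathbb{C})$. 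I would fix a smooth partition of unity $\{\varphi_n\}_{n\ge1}$ adapted to the scales $2^{-n}$, with $\varphi_n$ supported in a fixed dilate $A_n'(0)$ of $A_n(0)$ and $|\bar{\partial}\varphi_n|\lesssim 2^n$, form the Vitushkin localizations $f_n=T_{\varphi_n}f$, and write $f=\sum_{n\ge1}f_n+g$ where $g$ is analytic on a neighbourhood of $0$ with $|g^{(t)}(0)|\lesssim\|f\|$. Each $f_n$ vanishes at $\infty$, is analytic off a compact set $K_n\subseteq A_n'(0)\setminus X$, and has $\|f_n\|_{V\mathscr{L}^{p,\lambda}}\lesssim\|f\|_{V\mathscr{L}^{p,\lambda}}$; since $\mathrm{dist}(0,K_n)\approx 2^{-n}$, expressing $f_n$ through its $\bar{\partial}$-datum on $K_n$ gives $|f_n^{(t)}(0)|\lesssim 2^{(t+1)n}\,\tau_n$, where $\tau_n$ is the total mass (in the appropriate dual sense) of that datum. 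Everything thus reduces to estimating $\tau_n$.

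For the sufficiency direction the heart of the matter is the inequality $\tau_n\lesssim \|f\|_{V\mathscr{L}^{p,\lambda}}\,M_*^{\,d}(A_n(0)\setminus X)$, the Campanato analogue of the bound $\|\nu\|\lesssim\|g\|_\infty\,\gamma(K)$ for bounded $g$ analytic off $K$ and of the $(1+\alpha)$-content bound of \cite{Lord}. I would prove it by covering $K_n$ by discs $D(z_j,r_j)$ with $\sum_j r_j^{\,d}$ comparable to $M_*^{\,d}(K_n)$ (legitimate because $K_n$ is compact and lower content is the supremum of content over compact subsets), estimating the contribution of each disc to $\tau_n$ by the $L^p$-oscillation inequality $\big(\int_{D(z,r)}|f-f_{D(z,r)}|^p\big)^{1/p}\lesssim\|f\|\,r^{\lambda/p}$ together with the telescoping estimate for the disc-averages $f_{D(z_j,r_j)}$ over the scales between $r_j$ and $\mathrm{diam}(K_n)$, and then summing: the leading term is controlled since $\sum_j r_j^{\,1+d}\le\mathrm{diam}(K_n)\sum_j r_j^{\,d}$, and the cross terms are absorbed into $\sum_j r_j^{\,d}$ precisely because $d<2$. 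Subadditivity of lower content then gives $M_*^{\,d}(K_n)\lesssim\sum_{|m-n|\le1}M_*^{\,d}(A_m(0)\setminus X)$, and summing $|f_n^{(t)}(0)|\lesssim 2^{(t+1)n}\|f\|M_*^{\,d}(A_n(0)\setminus X)$ over $n$, together with $|g^{(t)}(0)|\lesssim\|f\|$, produces the bounded point derivation.

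For the necessity direction I would argue by contraposition. Assuming $\sum_n 2^{(t+1)n}M_*^{\,d}(A_n(0)\setminus X)=\infty$, I would construct $g_j\in A_{p,\lambda}(X)$ with $\|g_j\|_{V\mathscr{L}^{p,\lambda}}\le1$ and $g_j^{(t)}(0)\to\infty$. For each relevant $n$, a fixed finite family of thin annular sectors covers $A_n(0)$, so one of them, $S_n$, carries at least a fixed fraction of the content; inside $S_n$ choose a compact $E_n\subseteq A_n(0)\setminus X$ with $M^{\,d}(E_n)\gtrsim M_*^{\,d}(A_n(0)\setminus X)$ and, by Frostman's lemma (available since $0<d\le 2$), a positive measure $\mu_n$ supported on $E_n$ with $\mu_n(D(z,r))\le r^{\,d}$ and $\|\mu_n\|\approx M^{\,d}(E_n)$. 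Its Cauchy transform $h_n$ is analytic off $E_n\subseteq\mathbb{C}\setminus X$, hence lies in $A(X)$, and belongs to $V\mathscr{L}^{p,\lambda}(\mathbb{C})$ with norm $\lesssim 1$ by the mapping property of the Cauchy transform on measures of $d$-dimensional growth — this is exactly where the relation $\lambda=2+(d-1)p$ enters. Because $\zeta$ remains in the thin sector $S_n$, on which $\zeta^{-(t+1)}$ turns through less than a right angle while $|\zeta|^{-(t+1)}\approx 2^{(t+1)n}$, multiplying $h_n$ by a suitable unimodular constant $\varepsilon_n$ makes $\mathrm{Re}\,\varepsilon_n h_n^{(t)}(0)\gtrsim 2^{(t+1)n}\|\mu_n\|\gtrsim 2^{(t+1)n}M_*^{\,d}(A_n(0)\setminus X)$. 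Taking $g_j=\tfrac1C\sum_{n=1}^{N_j}\varepsilon_n h_n$ for a suitable constant $C$ and indices $N_j$ with $\sum_{n\le N_j}2^{(t+1)n}M_*^{\,d}(A_n(0)\setminus X)\to\infty$ then finishes the argument, provided one establishes the uniform bound $\big\|\sum_{n=1}^N\varepsilon_n h_n\big\|_{V\mathscr{L}^{p,\lambda}}\le C$ with $C$ independent of $N$.

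The step I expect to be the main obstacle is the uniform Campanato bookkeeping in both halves. In the sufficiency argument this is the cross-scale telescoping of the disc-averages, which has to be absorbed into $\sum_j r_j^{\,d}$ with constants uniform over the full range $0<d<2$; the extreme cases $\lambda\to 2\pm p$ are the tight ones, and the value $\lambda=2$ produces a logarithmic factor that must be handled as in \cite{Deterding}. In the necessity argument it is the uniform bound on $\big\|\sum_{n\le N}\varepsilon_n h_n\big\|_{V\mathscr{L}^{p,\lambda}}$, which does not follow from any triangle inequality, since, unlike an $L^p$-norm, a Campanato seminorm is not additive over functions with separated singular supports; the way around this is to bound the oscillation of the sum over a test disc of scale $2^{-m}$ by splitting the $h_n$ into the finitely many that are rough at scale $2^{-m}$ (each contributing $O(1)$) and the rest (whose contributions decay geometrically in $|n-m|$ and so sum to $O(1)$). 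Once these two scale-uniform estimates are in hand, the remaining ingredients — the Vitushkin localization and the elementary properties of $T_{\varphi_n}$, the mapping properties of the Cauchy transform on $V\mathscr{L}^{p,\lambda}$, and Frostman's lemma — are standard.
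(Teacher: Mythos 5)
Your top-level strategy --- prove the general Campanato statement and read off Theorem \ref{Lord} as the case $\lambda=2+\alpha p$ --- is exactly how the paper treats this theorem, and your necessity half is close to the paper's argument (Frostman measures on $A_n(x)\setminus X$, Cauchy transforms, phase alignment, summation). The paper aligns phases with the pointwise unimodular weight $(\zeta/|\zeta|)^{t+1}$ inside the integral rather than with thin sectors, and, more importantly, it does \emph{not} sum the raw Frostman transforms: it first inserts decreasing weights $\epsilon_n\to0$ with $2^{n(t+1)}\epsilon_n M_*^{d}(A_n\setminus X)\le1$ and sums over blocks of normalized total weight, which is what makes the seminorm of the sum small (via Lemma \ref{lem1}). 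Your unweighted uniform bound $\|\sum_{n\le N}\varepsilon_n h_n\|\le C$ is not clearly true --- the far-scale contributions you expect to decay geometrically are each of size $\approx 2^{nd}M^{d}(E_n)$, which is merely $O(1)$ per term --- so you would likely be forced back to the paper's weighting device.

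The sufficiency half has a genuine gap. You propose to cover $K_n$ by discs with $\sum_j r_j^{\,d}$ ``comparable to $M_*^{\,d}(K_n)$,'' justified by the claim that lower content is a supremum over compact subsets. That is not what lower content is: $M_*^{d}$ is the supremum of $M^h$ over measure functions $h\le t^{d}$ with $t^{-d}h(t)\to0$, and it can be strictly smaller than the ordinary content $\inf\sum_j r_j^{\,d}$ even for compact sets (a line segment has positive $1$-content but zero lower $1$-content, since for any admissible $h$ a cover by $N$ intervals of length $1/N$ gives $Nh(1/N)\to0$). So your covering argument can only yield $\tau_n\lesssim M^{d}(K_n)$, and since $M^{d}\ge M_*^{d}$ this does not establish sufficiency under the stated hypothesis, which controls only the lower content. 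The missing ingredient is precisely the \emph{little}-Lipschitz (vanishing Campanato) modulus: because $\Omega_f^{p,\lambda}(\delta)\to0$, the function $h(t)=t^{1+\frac{\lambda-2}{p}}\Omega_f^{p,\lambda}(\tfrac32 t)$ is an admissible measure function for lower content, and the paper bounds each square's contribution by $2^{n(t+1)}r_j^{1+\frac{\lambda-2}{p}}\bigl(r_j^{-\lambda}\int_{Q_j^*}|f-f_{Q_j^*}|^p\,dA\bigr)^{1/p}\le 2^{n(t+1)}h(r_j)$, so that the infimum over covers is at most $2^{n(t+1)}M_*^{1+\frac{\lambda-2}{p}}(K_n)$. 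Your proposal never invokes this vanishing modulus, so as written it proves a weaker statement. Relatedly, the cross-scale telescoping of disc averages (and the logarithmic worry at $\lambda=2$) is unnecessary: since $\phi_j$ is supported away from $x$, one has $\int_{Q_j^*}(z-x)^{-(t+1)}\frac{\partial\phi_j}{\partial\overline{z}}\,dA=0$, so the local mean $f_{Q_j^*}$ can be subtracted directly and only single-scale oscillations ever appear.
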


\bigskip
\noindent The second author proved the following theorem for the case of BMO approximation \cite[Theorem 1]{Deterding}. 

\begin{theorem}
\label{BMO}

Let $X$ be a compact subset of $\mathbb{C}$ with the property that every relatively open subset of $X$ has positive area, let $t$ be a non-negative integer and let $A_0(X)$ denote the space of $VMO(\mathbb{C})$ functions that are analytic on a neighborhood of $X$. Choose $x \in \partial X$ and let $A_n(x)$ denote the annulus  $\{2^{-(n+1)} \leq |z-x| \leq 2^{-n}\}$. Then $A_0(X)$ admits a $t$-th order bounded point derivation at $x$ if and only if 

\begin{equation*}
    \sum_{n=1}^{\infty} 2^{(t+1)n} M^1_*(A_n(x) \setminus X) < \infty,
\end{equation*}

\bigskip

\noindent where $M^1_*$ denotes lower 1-dimensional Hausdorff content.

\end{theorem}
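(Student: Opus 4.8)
I describe a plan for the main result, Theorem~\ref{main}; Theorems~\ref{Lord} and~\ref{BMO} are the special cases $\frac{\lambda-2}{p}=\alpha\in(0,1)$ and $\lambda=2$ and should come out of the same argument. The first move is to turn boundedness of the functional $D_t\colon f\mapsto f^{(t)}(x)$ into a statement about a representing kernel supported off $X$. If $f\in A_{p,\lambda}(X)$ is analytic on a neighbourhood $V\supseteq X$ and $\psi\in C_c^\infty(V)$ is $1$ near $X$, then applying the Cauchy--Pompeiu formula to $\psi f$ and differentiating $t$ times at $x$ gives
\[
f^{(t)}(x)=-\frac{t!}{\pi}\int_{\mathbb{C}} f(w)\,\frac{\bar\partial\psi(w)}{(w-x)^{t+1}}\,dA(w),
\]
so $D_t$ is the restriction to $A_{p,\lambda}(X)$ of $f\mapsto\int fg_\psi\,dA$ with $g_\psi=-\frac{t!}{\pi}\bar\partial\psi\,(w-x)^{-t-1}$ supported in $V\setminus X$. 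Extending $D_t$ to all of $V\mathscr{L}^{p,\lambda}(\mathbb{C})$ by Hahn--Banach and running a Vitushkin-type localization as in \cite{Lord,Deterding}, I would reduce to the equivalence: $D_t$ is bounded iff $\inf_g\|g\|_*<\infty$, the infimum over all kernels $g$ supported on $\mathbb{C}\setminus X$ that represent $D_t$, where $\|\cdot\|_*$ is the norm of the predual of $V\mathscr{L}^{p,\lambda}(\mathbb{C})$. Carrying this out requires first identifying that predual as an atomic Hardy--Morrey space with $V\mathscr{L}^{p,\lambda}(\mathbb{C})$ as its dual, a routine but necessary extension of the $H^1$/$VMO$ and little-Lipschitz dualities behind the cited results.

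\emph{Dyadic localization.} Using a smooth partition of unity $1=\sum_{n\ge 1}\chi_n$ on a punctured neighbourhood of $x$ with $\chi_n$ supported in $\{2^{-n-2}\le|z-x|\le 2^{-n+1}\}$ and $|\nabla\chi_n|\lesssim 2^n$, write $g=\sum_n g_n$, $g_n=\chi_n g$, with $g_n$ supported in $(A_{n-1}\cup A_n\cup A_{n+1})(x)\setminus X$. Because these supports sit at geometrically separated scales, the predual norm splits, $\|g\|_*\asymp\sum_n\|g_n\|_*$, and likewise the infimum over admissible $g$ splits as a sum of infima over admissible blocks. On the $n$-th block $|w-x|\asymp 2^{-n}$, hence $|(w-x)^{-t-1}|\asymp 2^{(t+1)n}$; factoring this scalar out shows the least admissible value of $\|g_n\|_*$ is $\asymp 2^{(t+1)n}\rho_n$, where $\rho_n$ is scale-free (the least predual norm of $\bar\partial$ of a cutoff that is $1$ on $X$ near $A_n(x)$ and $0$ off a slight enlargement). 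Everything then rests on the estimate $\rho_n\asymp M_*^{1+\frac{\lambda-2}{p}}(A_n(x)\setminus X)$ with constants independent of $n$, after which summing over $n$ yields the stated criterion.

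\emph{The content estimate (the crux).} For the upper bound on $\rho_n$, which gives sufficiency: cover $A_n(x)\setminus X$ by disks $D(x_j,r_j)$ with $\sum_j r_j^{1+\frac{\lambda-2}{p}}\le 2M_*^{1+\frac{\lambda-2}{p}}(A_n(x)\setminus X)$, choose a cutoff with $|\bar\partial\psi_n|\lesssim\sum_j r_j^{-1}\mathbf 1_{2D(x_j,r_j)}$, and bound the atomic norm of $\bar\partial\psi_n\,(w-x)^{-t-1}$ disk by disk, each $D(x_j,r_j)$ contributing $\asymp r_j^{1+\frac{\lambda-2}{p}}$ once the Morrey normalization in $\|\cdot\|_*$ is accounted for. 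For the lower bound, which gives necessity: for any admissible cutoff one needs a test function $F_n\in A_{p,\lambda}(X)$ with $\|F_n\|_{V\mathscr{L}^{p,\lambda}}\le C$ and $F_n^{(t)}(x)\ge c\,2^{(t+1)n}M_*^{1+\frac{\lambda-2}{p}}(A_n(x)\setminus X)$. I would take $F_n$ to be the Cauchy transform of a phase-modulated Frostman measure $\nu_n=e^{i(t+1)\arg(w-x)}|\nu_n|$ with $|\nu_n|$ supported on $A_n(x)\setminus X$, of total mass $\asymp M_*^{1+\frac{\lambda-2}{p}}(A_n(x)\setminus X)$ and growth $|\nu_n|(D(z,r))\lesssim r^{1+\frac{\lambda-2}{p}}$; the phase makes $F_n^{(t)}(x)=t!\int|w-x|^{-t-1}\,d|\nu_n|(w)\asymp 2^{(t+1)n}\|\nu_n\|$ with no cancellation, while the growth condition is exactly what keeps the Campanato seminorm of a Cauchy transform bounded (and, after a mild mollification or a passage through finite coverings, also vanishing). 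Pairing $F_n$ against $g$ then forces $\|g_n\|_*\gtrsim 2^{(t+1)n}M_*^{1+\frac{\lambda-2}{p}}(A_n(x)\setminus X)$.

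\emph{Assembly and main obstacle.} If the series converges, the upper bounds assemble into an admissible kernel $g=\sum_n g_n$ with $\|g\|_*\lesssim\sum_n 2^{(t+1)n}M_*^{1+\frac{\lambda-2}{p}}(A_n(x)\setminus X)<\infty$, so $D_t$ extends boundedly to $A_{p,\lambda}(X)$. If it diverges, the functions $F_n$ (whose $t$-th derivatives at $x$ are all real and positive, by the phase choice) have boundedly overlapping supports at separated scales, so finite sums $G_N=\sum_{n\le N}F_n$ satisfy $\|G_N\|_{V\mathscr{L}^{p,\lambda}}\le C$ while $G_N^{(t)}(x)\to\infty$, contradicting any point-derivation bound. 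The hypotheses $\lambda\ge 0$ and $2-p<\lambda<2+p$ enter only through $0<1+\frac{\lambda-2}{p}<2$, which is what makes $M_*^{1+\frac{\lambda-2}{p}}$ a nondegenerate subadditive set function and keeps $V\mathscr{L}^{p,\lambda}$ a genuine function space. I expect the real difficulty to be the crux, especially the lower bound: producing an \emph{analytic} function in the \emph{vanishing} Campanato space out of a measure realizing the lower $\big(1+\frac{\lambda-2}{p}\big)$-dimensional content, with its seminorm controlled at all scales and the little-oh condition verified. This needs a Cauchy-transform estimate phrased in terms of Hausdorff content at the nonstandard dimension $1+\frac{\lambda-2}{p}$, uniform in $p$ and $\lambda$, generalizing the modulus-of-continuity and $BMO$ estimates available in the Lipschitz and $VMO$ settings; a secondary difficulty is the construction of the predual of $V\mathscr{L}^{p,\lambda}$ together with the scale-separation identity $\|g\|_*\asymp\sum_n\|g_n\|_*$ with uniform constants.
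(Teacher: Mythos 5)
Your core constructions are the ones the paper uses: for sufficiency, a Vitushkin-type localization with a cutoff built from a near-optimal cover of $A_n(x)\setminus X$, where the vanishing condition enters by making $h(r)=r^{1+\frac{\lambda-2}{p}}\Omega_f^{p,\lambda}(\tfrac32 r)$ an admissible measure function for the lower content; for necessity, the Cauchy transform of the phase-modulated Frostman measure, $f_n(z)=\int(\zeta/|\zeta|)^{t+1}(\zeta-z)^{-1}\,d\nu_n(\zeta)$, whose $t$-th derivative at $x$ equals $t!\int|\zeta|^{-(t+1)}d\nu_n\gtrsim 2^{(t+1)n}M_*^{1+\frac{\lambda-2}{p}}(A_n(x)\setminus X)$ with no cancellation. (Theorem~\ref{BMO} itself is just the case $\lambda=2$, where $1+\frac{\lambda-2}{p}=1$.) However, two steps as written would not go through. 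First, the predual reduction is both unproven and unnecessary: you never establish the atomic predual of $V\mathscr{L}^{p,\lambda}(\mathbb{C})$ nor the scale-splitting $\|g\|_*\asymp\sum_n\|g_n\|_*$ (the lower bound is the problematic direction), and the paper needs neither --- sufficiency is a direct Green's-theorem estimate of $f^{(t)}(x)$, and necessity is a direct violation of the point-derivation bound by a norm-null sequence.

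Second, and more seriously, your assembly of the necessity direction fails as stated. The functions $F_n$ do not have boundedly overlapping supports: only the measures $\nu_n$ do, while each Cauchy transform is nonzero on all of $\mathbb{C}$, and on a ball $B\subseteq A_k$ with $k$ far from $n$ it contributes on the order of $\epsilon_n 2^{n(1+\frac{\lambda-2}{p})}M_*^{1+\frac{\lambda-2}{p}}(A_n\setminus X)$ to the normalized oscillation. So $\bigl\|\sum_{n\le N}F_n\bigr\|_{\mathscr{L}^{p,\lambda}}$ is not bounded merely because each summand has bounded seminorm; the cross-annulus contributions must be summed, and when $\sum_n 2^{(t+1)n}M_*^{1+\frac{\lambda-2}{p}}(A_n\setminus X)=\infty$ there is no a priori control of that sum. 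The paper repairs this by inserting weights $\epsilon_n\downarrow 0$ that keep the weighted series divergent while bounding each term by $1$, proving quantitative decay estimates for $f_n$ away from $A_n$ (Lemma~\ref{lem1}, propositions 2--4), and then working with tail blocks $g_m=\sum_{n=m}^{M}f_n$ whose Campanato norms tend to $0$ while $g_m^{(t)}(0)$ stays bounded below; the same weights, via $[f_n]_{\mathscr{L}^{p,\lambda}}\le C\epsilon_n$, are what place the construction in the \emph{vanishing} space, which your appeal to ``mild mollification'' does not address. A further omission: the test-function estimates only work for $p<2$ (the H\"older exponents require it), and the paper recovers $p\ge 2$ through the Campanato coincidence $\mathscr{L}^{p,\lambda}=\mathscr{L}^{p_1,\lambda_1}$ when $\frac{\lambda_1-2}{p_1}=\frac{\lambda-2}{p}$, a reduction absent from your uniform-in-$p$ claim.
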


\bigskip
\noindent O'Farrell has also proven a similar theorem involving negative Lipschitz classes \cite[Theorem 3.7]{O'Farrell2018}. 

\begin{theorem}
\label{O'Farrell}
    Suppose $0 < \beta < 1$ and $s= \beta-1$. Let $U \subseteq \mathbb{C}$ be a bounded open set and $x \in \partial U$. Let $A^s_x(U)$ denote the functions on $U$ in the small negative Lipschitz space that are analytic on some neighborhood of $x$ and let $A_n(x)$ denote the annulus $ \{2^{-(n+1)} \leq |z-x| \leq 2^{-n}\}$. Then $A^s_x(U)$ admits a $t$-th order bounded point derivation at $x$ if and only if

\begin{align*}
    \sum_{n=1}^{\infty} 2^{n(t+1)} M^{\beta}_*(A_n(x) \setminus U) < \infty,
\end{align*}

\bigskip

\noindent where $M^{\beta}_*$ denotes lower $\beta$-dimensional Hausdorff content.
    
\end{theorem}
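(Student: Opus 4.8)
Write $\gamma=1+\frac{\lambda-2}{p}$; the hypothesis $2-p<\lambda<2+p$ says precisely that $0<\gamma<2$, and $\gamma-1=\frac{\lambda-2}{p}$ is the order of smoothness built into $V\mathscr{L}^{p,\lambda}(\mathbb{C})$: up to equivalence of seminorms this space is the little Lipschitz class of order $\gamma-1$ when $\gamma>1$, $VMO(\mathbb{C})$ when $\gamma=1$, and the small negative Lipschitz space of order $\gamma-1$ when $\gamma<1$, so Theorem~\ref{main} will contain Theorems~\ref{Lord}, \ref{BMO} and \ref{O'Farrell} as special cases. The plan is to prove the two implications separately, following the Vitushkin-type localization scheme of those references; the key new input is the standard growth estimate for Cauchy transforms, transcribed to this setting: if $\nu$ is a complex Borel measure with $|\nu|(D(z,r))\le r^{\gamma}$ for all $z,r$, then its Cauchy transform $\widehat{\nu}(z)=\int(w-z)^{-1}\,d\nu(w)$ lies in $\mathscr{L}^{p,\lambda}(\mathbb{C})$ with $[\widehat{\nu}]_{p,\lambda}\le C(p,\lambda)$ (this uses $\gamma-1=\frac{\lambda-2}{p}$, matching the oscillation of $\widehat\nu$ on a disk of radius $R$, namely $R^{\gamma-1}$, with the Campanato scaling $R^{\lambda/p}$).

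\textbf{Sufficiency.} Assume the series converges. Fix a partition of unity $\{\varphi_n\}_{n\ge1}$ with $\varphi_n\in C_c^{\infty}$ supported on $A_{n-1}(x)\cup A_n(x)\cup A_{n+1}(x)$, $\|\bar\partial\varphi_n\|_{\infty}\lesssim 2^{n}$, and $\sum_{n\ge1}\varphi_n\equiv1$ on a punctured neighbourhood of $x$. For $f$ analytic in a neighbourhood $V$ of $X$, let $f_n:=T_{\varphi_n}f$, where $T_{\varphi}f(z)=\frac1\pi\int\frac{(f(w)-f(z))\,\bar\partial\varphi(w)}{w-z}\,dA(w)$ is the Vitushkin localization operator. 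Then $f_n$ vanishes at $\infty$, $\bar\partial f_n=\varphi_n\bar\partial f$ is supported on the compact set $\widetilde K_n:=\mathrm{supp}\,\varphi_n\setminus V\subseteq(A_{n-1}\cup A_n\cup A_{n+1})(x)\setminus X$, so $f_n$ is analytic off $\widetilde K_n$ and in particular on a disk $D(x,c\,2^{-n})$, and $[f_n]_{p,\lambda}\lesssim[f]_{p,\lambda}$ with constant independent of $n$ because $T_{\varphi}$ is bounded on $V\mathscr{L}^{p,\lambda}(\mathbb{C})$ with a scale-invariant norm. Writing $f=f_0+\sum_{n\ge1}f_n$ near $x$, where $f_0$ collects the contribution from a fixed outer scale and obeys $|f_0^{(t)}(x)|\lesssim[f]_{p,\lambda}$, the matter reduces to the local estimate
\[
  \bigl|f_n^{(t)}(x)\bigr|\le C\,2^{(t+1)n}\,M_*^{\gamma}\bigl(\widetilde K_n\bigr)\,[f]_{p,\lambda}.
\]
To prove this I would cover $\widetilde K_n$ by finitely many disks $\{D(z_j,r_j)\}$ with $\sum_j r_j^{\gamma}\le2\,M^{\gamma}(\widetilde K_n)$ and, after merging, $r_j\le c\,2^{-n}$ and $|z_j-x|\asymp2^{-n}$; localize once more, $f_n=\sum_j g_j$ with $g_j:=T_{\psi_j}f_n$ analytic off $\overline{D(z_j,2r_j)}$, $g_j(\infty)=0$, $[g_j]_{p,\lambda}\lesssim[f]_{p,\lambda}$; represent $g_j^{(t)}(x)=\frac{t!}{2\pi i}\oint_{\partial D(z_j,3r_j)}\frac{g_j(z)-c_j}{(z-x)^{t+1}}\,dz$ for the average $c_j=(g_j)_{D(z_j,4r_j)}$ (the constant being admissible since $x\notin D(z_j,3r_j)$), and bound $\sup_{\partial D(z_j,3r_j)}|g_j-c_j|\lesssim r_j^{\gamma-1}[f]_{p,\lambda}$ using subharmonicity of $|g_j-c_j|$ on the annulus where $g_j$ is analytic together with the Campanato bound. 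This gives $|g_j^{(t)}(x)|\lesssim2^{(t+1)n}r_j^{\gamma}[f]_{p,\lambda}$; summing in $j$ yields the displayed estimate, and summing in $n$, with the bounded overlap of the annuli, gives $|f^{(t)}(x)|\lesssim[f]_{p,\lambda}\sum_{n\ge1}2^{(t+1)n}M_*^{\gamma}(A_n(x)\setminus X)<\infty$.

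\textbf{Necessity.} Assume the series diverges; I will construct functions in $A_{p,\lambda}(X)$ on which $f\mapsto f^{(t)}(x)$ is unbounded. For each $n$ with $M_*^{\gamma}(A_n(x)\setminus X)>0$, choose a compact $K_n\subseteq A_n(x)\setminus X$ with $M^{\gamma}(K_n)\ge\tfrac12 M_*^{\gamma}(A_n(x)\setminus X)$ and, by Frostman's lemma, a positive measure $\mu_n$ on $K_n$ with $\mu_n(K_n)\gtrsim M^{\gamma}(K_n)$ and $\mu_n(D(z,r))\le r^{\gamma}$ for all $z,r$. Put $\nu_n=\bigl(\tfrac{w-x}{|w-x|}\bigr)^{t+1}\mu_n$, so $\widehat{\nu}_n$ is analytic off $K_n$, vanishes at $\infty$, and by the choice of phase $\widehat{\nu}_n^{(t)}(x)=t!\int|w-x|^{-(t+1)}\,d\mu_n(w)\ge t!\,2^{(t+1)n}\mu_n(K_n)\gtrsim2^{(t+1)n}M_*^{\gamma}(A_n(x)\setminus X)$. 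Set $f_N=\sum_{n=1}^{N}\widehat{\nu}_n=\widehat{\tau_N}$ with $\tau_N=\sum_{n\le N}\nu_n$. Then $f_N$ is analytic off $\bigcup_{n\le N}K_n$, a compact set disjoint from $X$, so $f_N$ is analytic in a neighbourhood of $X$; the $t$-th derivatives add constructively, $f_N^{(t)}(x)\gtrsim\sum_{n=1}^{N}2^{(t+1)n}M_*^{\gamma}(A_n(x)\setminus X)\to\infty$; and $|\tau_N|=\sum_{n\le N}\mu_n$ has $\gamma$-growth uniformly in $N$, since the $A_n(x)$ are disjoint: a disk $D(z,r)$ with $r<\tfrac12|z-x|$ meets only $O(1)$ of them, contributing $\lesssim r^{\gamma}$ by Frostman's bound, while a disk with $r\ge\tfrac12|z-x|$ lies in $D(x,3r)$ and meets only the annuli with $2^{-n}\lesssim r$, of total mass $\sum_{2^{-n}\lesssim r}\mu_n(K_n)\lesssim\sum_{2^{-n}\lesssim r}2^{-n\gamma}\lesssim r^{\gamma}$ by geometric summation. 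Hence $[f_N]_{p,\lambda}\le C$ independently of $N$. The one point requiring extra care is that $f_N$ must lie in the \emph{vanishing} Campanato space, not merely in $\mathscr{L}^{p,\lambda}(\mathbb{C})$; this is arranged by replacing the $\mu_n$ by suitably regularized measures, and is the step where the bound $\lambda<2+p$ is used, as in the corresponding arguments of the references. Combining, $|f_N^{(t)}(x)|/[f_N]_{p,\lambda}\to\infty$, so no $t$-th order bounded point derivation exists.

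\textbf{The main obstacle.} I expect the real work to be on the sufficiency side, and specifically in two places. First, the scale-invariant boundedness of the Vitushkin localization operator on $V\mathscr{L}^{p,\lambda}(\mathbb{C})$ — a Calderón--Zygmund-type estimate for Campanato spaces that is the linchpin of the whole localization machine and must be set up so the annular pieces can be summed. Second, the sharp local estimate for $|f_n^{(t)}(x)|$: although its proof above looks routine, it hinges on the fact that the ``capacity'' naturally dual to this problem is comparable, up to absolute constants, with the Hausdorff content $M_*^{\gamma}$ — a Frostman-type comparison that is the Campanato counterpart of the analytic-capacity estimates used in the cited works, and it must simultaneously exploit the analyticity of $f_n$ off a set of small $\gamma$-content and the Hölder-type decay forced by membership in $V\mathscr{L}^{p,\lambda}(\mathbb{C})$. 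A secondary but genuine technical point, present in both directions, is staying inside the vanishing space rather than the full Campanato space.
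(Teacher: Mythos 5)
Your overall strategy matches the paper's: this statement is not proved directly but obtained as the special case $\lambda=2+ps$ (equivalently $\frac{\lambda-2}{p}=s=\beta-1$, so $1+\frac{\lambda-2}{p}=\beta$) of the general theorem for $A_{p,\lambda}(X)$, and your two-sided argument for that general theorem (a localization/covering argument with the Cauchy kernel for sufficiency; Frostman measures and Cauchy transforms with the phase $\bigl(\frac{\zeta-x}{|\zeta-x|}\bigr)^{t+1}$ for necessity) is the same scheme the paper uses. However, there is a genuine gap in your sufficiency argument. You cover $\widetilde K_n$ by disks with $\sum_j r_j^{\gamma}\le 2M^{\gamma}(\widetilde K_n)$ and bound each local piece by $r_j^{\gamma}[f]_{p,\lambda}$; summing gives $|f_n^{(t)}(x)|\lesssim 2^{(t+1)n}M^{\gamma}(\widetilde K_n)[f]_{p,\lambda}$ with the \emph{ordinary} Hausdorff content, not the lower content $M_*^{\gamma}$ that appears in the hypothesis. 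Since $M_*^{\gamma}\le M^{\gamma}$ (and the inequality can be strict enough that $\sum_n 2^{(t+1)n}M^{\gamma}$ diverges while $\sum_n 2^{(t+1)n}M_*^{\gamma}$ converges), this does not prove the theorem. The missing idea is precisely the one you dismissed as a ``secondary technical point'': because $f$ lies in the \emph{vanishing} space, the local Campanato quantity on a square of side $r_j$ is controlled by $\Omega_f^{p,\lambda}(\tfrac32 r_j)$, so the covering sum has the form $\sum_j h(r_j)$ with $h(t)=t^{\gamma}\Omega_f^{p,\lambda}(\tfrac32 t)$, which satisfies $t^{-\gamma}h(t)\to 0$ and is therefore an admissible measure function for $M_*^{\gamma}$; taking the infimum over covers for \emph{this} $h$ (not for $t^{\gamma}$) yields $M^h(K_n)\le M_*^{\gamma}(K_n)$. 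This is the load-bearing step of the sufficiency proof, and your local estimate $\sup|g_j-c_j|\lesssim r_j^{\gamma-1}[f]_{p,\lambda}$, which uses only the full seminorm, cannot produce it.

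There is a second, smaller gap on the necessity side. Your functions $f_N=\sum_{n\le N}\widehat{\nu}_n$ are shown (plausibly) to have uniformly bounded Campanato seminorm, but they must lie in $V\mathscr{L}^{p,\lambda}(\mathbb{C})$, and you only gesture at ``suitably regularized measures.'' The paper's device is to insert a decreasing sequence $\epsilon_n\to 0$ into the Frostman bound, $\nu_n(B_r)\le\epsilon_n r^{\gamma}$, chosen so that $\sum_n 2^{(t+1)n}\epsilon_n M_*^{\gamma}(A_n\setminus X)$ still diverges while $2^{(t+1)n}\epsilon_n M_*^{\gamma}(A_n\setminus X)\le 1$; this gives $[f_n]_{\mathscr{L}^{p,\lambda}}\le C\epsilon_n$ and allows one to form blocks $g_m=\sum_{n=m}^{M}f_n$ with $\|g_m\|_{\mathscr{L}^{p,\lambda}}\to 0$ while $g_m^{(t)}(x)$ stays bounded below, which both settles the vanishing-space issue and disposes of the unboundedness in one stroke. (The paper also restricts the measure-theoretic estimates to $p<2$ and transfers to $p\ge 2$ via the Campanato embedding $\frac{\lambda_1-2}{p_1}=\frac{\lambda-2}{p}$; for the present statement this is harmless since one may simply take $p=1$, $\lambda=1+\beta$.)
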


\bigskip
\noindent In the next section, it will be demonstrated that these three theorems are special cases of Theorem \ref{main}. In particular, Theorem \ref{Lord} is the case of $\lambda = 2+ p\alpha$, Theorem \ref{BMO} is the case of $\lambda = 2$, and Theorem \ref{O'Farrell} is the case of $\lambda = 2+ ps$.

\section{Campanato Spaces}

Campanato spaces (also called Morrey-Campanato spaces) were introduced by Campanato in 1963 \cite{Campanato} and generalize spaces of functions of bounded mean oscillation. Let $f \in L^1_{loc}(\mathbb{C})$ and let $B$ be a ball with radius $r$. If $|B|$ denotes the area of $B$, then the mean value of $f$ on $B$, which is denoted by $f_B$ is given by

\begin{align*}
    f_B = \frac{1}{|B|} \int_B |f| dA.
\end{align*}

\bigskip
\noindent Let $1 \leq p < \infty$ and $\lambda \geq 0$. The Campanato seminorm, which we denote by $[f]_{\mathscr{L}^{p,\lambda}}$, generalizes the mean oscillation of $f$ and is given by

\begin{align*}
    [f]_{\mathscr{L}^{p,\lambda}} = \sup_B \left( \frac{1}{r^{\lambda}} \int_B |f(z) - f_B|^p dA  \right)^{\frac{1}{p}}
\end{align*}

\bigskip
\noindent where the supremum is taken over all balls $B$ in $\mathbb{C}$. Equivalently (See \cite[Lemma 5.6.1]{Pick} for proof.) the Campanato seminorm can also be given by 

\begin{align*}
    [f]_{\mathscr{L}^{p,\lambda}} = \sup_B \left( \frac{1}{r^{\lambda}} \inf_{c \in \mathbb{C}} \int_B |f(z) - c|^p dA  \right)^{\frac{1}{p}}.
\end{align*}

\bigskip
\noindent In both definitions of the Campanato seminorm, up to a constant multiple, the supremum can be taken over squares with side length $r$ instead of balls of radius $r$. The Campanato space $\mathscr{L}^{p, \lambda}(\mathbb{C})$ is the space of $L^p$ functions with finite Campanato seminorms. That is,

\begin{align*}
    \mathscr{L}^{p, \lambda}(\mathbb{C}) = \{f \in L^p(\mathbb{C}): [f]_{\mathscr{L}^{p,\lambda}} < \infty\}.
\end{align*}

\bigskip
\noindent $\mathscr{L}^{p, \lambda}(\mathbb{C})$ is a Banach space with norm given by

\begin{align*}
    ||f||_{\mathscr{L}^{p,\lambda}} = [f]_{\mathscr{L}^{p,\lambda}} + ||f||_p,
\end{align*}

\bigskip
\noindent where $||f||_p$ denotes the $L^p$ norm. We also note that if $f \in L^p(\mathbb{C})$, then $f \in \mathscr{L}^{p, \lambda}(\mathbb{C})$ if for each ball $B$ there exists a constant $c(B)$ such that 

\begin{align*}
    \int_B |f(z) - c(B)|^p dA(z) \leq C(f) r^{\lambda},
\end{align*},

\bigskip
\noindent where the constant $C(f)$ depends only on $f$.

\bigskip

An important feature of Campanato spaces is the following coincidence of spaces that comes from the Campanato embedding property \cite[Theorem 5.5.1]{Pick}. If $p, p_1, \lambda, \lambda_1$ are such that $1 \leq p, p_1 < \infty$, $0\leq \lambda, \lambda_1 < \infty$, and $\frac{\lambda_1-2}{p_1} = \frac{\lambda-2}{p}$ then $\mathscr{L}^{p,\lambda}(\mathbb{C}) = \mathscr{L}^{p_1, \lambda_1}(\mathbb{C})$.

\bigskip

The significance of the Campanato spaces is that they contain several well known function spaces as special cases. For $p \in [1, \infty)$, the case of $\lambda = 2$ is BMO$(\mathbb{C})$ the space of functions of bounded mean oscillation, the case $2<\lambda \leq 2+p$ is the space of Lipschitz continuous functions Lip$_{\alpha}(\mathbb{C})$, where $\alpha = \frac{\lambda-2}{p}$, and the case of $\lambda < 2$ corresponds to Morrey spaces \cite[Theorem 5.7.1]{Pick}. Another coincidence with $\lambda<2$ occurs with the negative Lipschitz space Lip$_{\beta}(\mathbb{C})$, where $\beta = \frac{\lambda-2}{p}$\cite{O'Farrell1988}. It should be noted that $\mathscr{L}^{p,\lambda}(\mathbb{C})$ consists only of constant functions when $\lambda>p+2$.

\bigskip

We now define the vanishing Campanato spaces to serve as generalizations of the space of functions of vanishing mean oscillation. Given $f \in L^p(\mathbb{C})$ and $\delta>0$ let 

\begin{align*}
    \Omega_f^{p,\lambda}(\delta) = \sup_B\left\{\left( \frac{1}{r^{\lambda}} \int_B |f(z) - f_B|^p dA  \right)^{\frac{1}{p}}: \textnormal{ radius } B \leq \delta\right\}
\end{align*}

\bigskip
\noindent where the supremum is taken over all balls $B \subseteq \mathbb{C}$ with radius $\leq \delta$. Let $V\mathscr{L}^{p, \lambda}(\mathbb{C})$ denote the subspace of functions in $\mathscr{L}^{p, \lambda}(\mathbb{C})$ with the property that $\Omega_f^{p,\lambda}(\delta) \to 0$ as $\delta \to 0$. $V\mathscr{L}^{p, \lambda}(\mathbb{C})$ are the vanishing Campanato spaces. As with the Campanato spaces, up to a constant multiple, the supremum can be taken over squares instead of balls in the definition of the vanishing Campanato spaces.

\bigskip

Like the Campanato spaces, the vanishing Campanato spaces include some well known function spaces as special cases. When $\lambda = 2$, $V\mathscr{L}^{p, \lambda}(\mathbb{C})$ coincides with $VMO(\mathbb{C})$, the space of functions of vanishing mean oscillation. Likewise when $2<\lambda \leq 2+p$, $V\mathscr{L}^{p, \lambda}(\mathbb{C})$ coincides with the little Lipschitz class lip$_{\alpha}(\mathbb{C})$ where $\alpha = \frac{\lambda-2}{p}$.

\bigskip

If $X$ is a compact set with the property that every relatively open subset of $X$ has positive area, then we define $\mathscr{L}^{p,\lambda}(X) = \{f|_X : f \in \mathscr{L}^{p,\lambda}(\mathbb{C})\}$ and $V\mathscr{L}^{p,\lambda}(X) = \{f|_X : f \in V\mathscr{L}^{p,\lambda}(\mathbb{C})\}$. Let $[f]_{\mathscr{L}^{p,\lambda}(X)} = \displaystyle \inf [F]_{\mathscr{L}^{p,\lambda}}$, where the infimum is taken over all functions $F$ such that $F =f$ on $X$. $[f]_{\mathscr{L}^{p,\lambda}(X)}$ is a seminorm on $\mathscr{L}^{p,\lambda}(X)$, which vanishes only at the constant functions. If we let $\displaystyle ||f||_{\mathscr{L}^{p,\lambda}(X)} = [f]_{\mathscr{L}^{p,\lambda}(X)} + ||f||_{L^p(X)}$, then $||f||_{\mathscr{L}^{p,\lambda}(X)}$ defines a norm on $\mathscr{L}^{p,\lambda}(X)$.

\bigskip

Let $A_{p,\lambda}(X)$ denote the space of $V\mathscr{L}^{p, \lambda}(\mathbb{C})$ functions that are analytic in a neighborhood of $X$. Suppose $x$ is a point on the boundary of $X$. Then $A_{p,\lambda}(X)$ admits a $t$-th order bounded point derivation at $x$ if there is a constant $C$ such that

\begin{align*}
    |f^{(t)}(x)| \leq C ||f||_{\mathscr{L}^{p,\lambda}(X)}
\end{align*}

\bigskip
\noindent for all functions $f \in A_{p,\lambda}(X)$. 

\section{Hausdorff Content}

It is conjectured that for each function space, the conditions for the existence of bounded point derivations are given in terms of a certain capacity. The appropriate capacity for studying bounded point derivations on $A_{p, \lambda}(X)$ is lower $(1+\frac{\lambda-2}{p})$-dimensional Hausdorff content, which is defined as follows. A measure function is an increasing function $h(t)$, $t \geq 0$ such that $h(t) \to 0$ as $t \to 0$. Given a measure function $h$ and a set $E \subseteq \mathbb{C}$, let

\begin{align*}
    M^h(E) = \inf \sum_j h(r_j),
\end{align*}

\bigskip

\noindent where the infimum is taken over all countable coverings of $E$ by squares with side length $r_j$. Let $\alpha>0$. The lower $\alpha$-dimensional Hausdorff content of $E$ is denoted by $M_*^{\alpha}(E)$ and defined by

\begin{align*}
    M_*^{\alpha}(E) = \sup M^h(E)
\end{align*}

\bigskip
\noindent where the supremum is taken over all measure functions $h$ with $h(t) \leq t^{\alpha}$ and $t^{-\alpha}h(t) \to 0$ as $t \to 0^+$. Furthermore up to a constant multiplicative bound, the infimum can be taken over dyadic squares, or balls of radius $r_j$. It follows directly from the definition that lower $\alpha$-dimensional Hausdorff content is monotone; that is, if $E \subseteq F$ then $M_*^{\alpha}(E) \leq M_*^{\alpha}(F)$. Another property of Hausdorff content is that if $B_r$ is a ball of radius $r$ then $M_*^{\alpha}(B_r) = r^{\alpha}$.

\bigskip

We now review Frostman's lemma (See \cite[pg.62]{Garnett} for proof.), which is a key result in relating Hausdorff content and measure.

\begin{lemma}
    Let $h$ be a measure function and let $K \subseteq \mathbb{C}$ be a set with positive lower $\alpha$-dimensional Hausdorff content. Then there is a Borel measure $\nu$ with support on $K$ such that 

    \begin{enumerate}
        \item $\nu(B) \leq C h(r)$ for all balls $B$ with radius $r$.
        \bigskip
        \item $\nu(K) \geq M_*^{\alpha}(K)$.
    \end{enumerate}
\end{lemma}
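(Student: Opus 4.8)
The plan is to prove Frostman's lemma by the classical dyadic construction: place a small amount of mass on a fine dyadic grid, trim it from the bottom up so that no dyadic square carries more than its allotted gauge value, and extract $\nu$ as a weak-$*$ limit. Two normalizations come first. By the remark in Section~3 it costs only a constant to restrict all coverings to dyadic squares, so I would work throughout with the dyadic version of $M^h$; and since $K$ has positive content we may translate and dilate so that $K$ lies in a fixed dyadic square $Q_0$ of side $1$, and assume $K$ is closed (the general case reduces to this by the inner regularity of Hausdorff content). We also fix a measure function $h$ with $h(t)\le t^{\alpha}$ and $t^{-\alpha}h(t)\to0$ for which $M^h(K)$ is as close to $M_*^{\alpha}(K)$ as we wish.

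For each large integer $N$, I would let $\nu_N^{(N)}$ be the measure that puts mass $h(2^{-N})$, spread as uniform area measure, on each dyadic square of side $2^{-N}$ meeting $K$, and no mass elsewhere. For $k=N-1,\dots,0$, given $\nu_N^{(k+1)}$, obtain $\nu_N^{(k)}$ by multiplying the restriction of $\nu_N^{(k+1)}$ to each dyadic square $Q$ of side $2^{-k}$ by $\min\{1,\,h(2^{-k})/\nu_N^{(k+1)}(Q)\}$; set $\nu_N=\nu_N^{(0)}$. An easy induction downward in the level shows $\nu_N(Q)\le h(\ell(Q))$ for every dyadic square $Q$ with $\ell(Q)\ge2^{-N}$, so $\|\nu_N\|=\nu_N(Q_0)\le h(1)$ and the family $\{\nu_N\}$, whose members are supported within $O(2^{-N})$ of $K$, is weak-$*$ precompact.

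The crux — and the step I expect to need the most care — is the reverse bound $\|\nu_N\|\ge M^h(K)$. Call a dyadic square \emph{cut} if the factor applied to it during the construction is strictly less than $1$, and let the \emph{stopping squares} be the maximal (under inclusion) cut squares together with the side-$2^{-N}$ squares meeting $K$ that have no cut dyadic ancestor. One checks that (i) every stopping square $R$ has $\nu_N(R)=h(\ell(R))$ exactly — for a maximal cut square because the cut leaves precisely that mass and no larger square is cut afterward, for the other kind because its initial mass is never reduced; (ii) the stopping squares are pairwise disjoint; and (iii) they cover $K$, since each $x\in K$ lies in a side-$2^{-N}$ square, and if an ancestor of that square is cut then the largest such is a maximal cut square. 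Hence $\|\nu_N\|\ge\sum_R\nu_N(R)=\sum_R h(\ell(R))\ge M^h(K)$, the last step because the stopping squares form a dyadic covering of $K$.

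Finally, I would pass to a weak-$*$ convergent subsequence $\nu_{N_j}\to\nu$. Since the supports shrink to the closed set $K$ and mass is conserved inside $Q_0$, $\nu$ is a Borel measure on $K$ with $\nu(K)=\|\nu\|\ge M^h(K)$, which gives $\nu(K)\ge M_*^{\alpha}(K)$ upon letting $h$ approach the supremum (and, if needed, taking one more weak-$*$ limit). For the local estimate, a fixed dyadic square $Q$ satisfies $\nu_{N_j}(Q)\le h(\ell(Q))$ for all large $j$, so passing to the limit through slightly enlarged open neighborhoods yields $\nu(Q)\le C\,h(\ell(Q))$; covering an arbitrary ball of radius $r$ by boundedly many dyadic squares of comparable side length then gives $\nu(B)\le C\,h(r)$. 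This last covering step (together with the slack between $M^h$ and $M_*^{\alpha}$) is where the constant $C$ in part~(1) enters, and it is the only place where an implicit doubling-type property of $h$ is used.
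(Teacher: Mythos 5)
The paper does not prove this lemma---it is quoted as Frostman's lemma with a citation to Garnett \cite[pg.~62]{Garnett}---and your dyadic bottom-up trimming construction with stopping squares and a weak-$*$ limit is precisely the standard proof given there, carried out correctly in all essential steps. (One small simplification: no doubling property of $h$ is needed in the last step, since a ball of radius $r$ meets only boundedly many dyadic squares of side $2^{-k}$ with $2^{-k}\leq r<2^{-k+1}$, and $h$ increasing already gives $h(2^{-k})\leq h(r)$.)
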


\section{Preliminary Results}

In this section, we prove some key lemmas that will be used in the proof of Theorem \ref{main}. Our first result is of independent interest, as it extends a result of Kaufman \cite[Theorem (b)]{Kaufman} to Campanato spaces.

\begin{theorem}
\label{analytic}
Suppose $1 \leq p < 2$ and $2-p < \lambda \leq 2+p$. Let $S$ be a compact set of positive $1+\frac{\lambda-2}{p}$-measure. Then there is a function $g$ analytic off $S$ in $\mathscr{L}^{p, \lambda}(\mathbb{C})$ with Taylor expansion $z^{-1}+\ldots$ at infinity.

\end{theorem}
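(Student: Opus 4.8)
The plan is to exhibit $g$ explicitly as a suitable normalization of the Cauchy transform of a Frostman measure on $S$. Write $\alpha = 1 + \frac{\lambda - 2}{p}$; the hypotheses $1 \le p < 2$ and $2 - p < \lambda \le 2 + p$ force $0 < \alpha \le 2$, and one records the identity $2 - p + p\alpha = \lambda$ for later use. Since $M_*^{\alpha}(S) > 0$, Frostman's lemma provides a nonzero Borel measure $\nu$ supported on $S$, together with a measure function $h$ satisfying $h(t) \le t^{\alpha}$ and $t^{-\alpha} h(t) \to 0$, such that $\nu(B) \le C h(r)$ for every ball $B$ of radius $r$; rescaling, we may assume $\nu(\mathbb{C}) = 1$. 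Set $g(z) = \int (z - w)^{-1}\, d\nu(w)$. Then $g$ is analytic on $\mathbb{C} \setminus \operatorname{supp}\nu$, hence analytic off $S$, and expanding $(z-w)^{-1} = \sum_{j \ge 0} w^{j} z^{-j-1}$ for large $|z|$ and integrating term by term gives $g(z) = z^{-1} + \left(\int w \, d\nu\right) z^{-2} + \cdots$, the required expansion at infinity.

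The substance of the theorem is the estimate $[g]_{\mathscr{L}^{p,\lambda}} < \infty$: for each ball $B = B(z_0, r)$ one must produce a constant $c = c(B)$ with $\int_B |g - c|^p \, dA \le C r^{\lambda}$. I would fix $B$, split $\nu = \nu_1 + \nu_2$ with $\nu_1 = \nu|_{2B}$ and $\nu_2 = \nu|_{\mathbb{C} \setminus 2B}$, write $g = g_1 + g_2$ for the corresponding Cauchy transforms, and take $c = g_2(z_0)$, which is legitimate since $g_2$ is analytic on $2B$. For the near part, Minkowski's integral inequality gives $\|g_1\|_{L^p(B)} \le \int_{2B} \|\,|\cdot - w|^{-1}\,\|_{L^p(B)} \, d\nu(w)$, and since $B \subseteq B(w, 3r)$ whenever $w \in 2B$, the inner norm is $\lesssim r^{(2-p)/p}$; this is exactly where $p < 2$ is used, as it makes $\rho^{1-p}$ integrable at $\rho = 0$. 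Combining with $\nu(2B) \lesssim h(2r) \le (2r)^{\alpha}$ and the identity $2 - p + p\alpha = \lambda$ yields $\int_B |g_1|^p \, dA \lesssim r^{\lambda}$.

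For the far part I would write $g_2(z) - g_2(z_0) = (z_0 - z) \int_{\mathbb{C} \setminus 2B} \frac{d\nu(w)}{(z-w)(z_0-w)}$ and use $|z - w| \asymp |z_0 - w| \ge r$ for $z \in B$, $w \notin 2B$, to get $|g_2(z) - g_2(z_0)| \lesssim r \int_{|z_0 - w| \ge r} |z_0 - w|^{-2}\, d\nu(w)$. Decomposing the region of integration into dyadic annuli centered at $z_0$ and inserting $\nu(B_\rho(z_0)) \le C h(\rho)$ bounds this integral by $\lesssim r^{\alpha - 2}$ when $\alpha < 2$, so $|g_2 - g_2(z_0)| \lesssim r^{\alpha-1}$ on $B$ and hence $\int_B |g_2 - g_2(z_0)|^p\, dA \lesssim r^{2 + p(\alpha-1)} = r^{\lambda}$. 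Adding the two contributions gives $\int_B |g - c|^p\, dA \lesssim r^{\lambda}$, so $[g]_{\mathscr{L}^{p,\lambda}} < \infty$.

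I expect the genuinely delicate point to be the far-part estimate at the endpoint $\alpha = 2$, i.e. $\lambda = 2 + p$: there the dyadic sum $\sum_k (2^k r)^{-2}\, \nu(B_{2^{k+1}r}(z_0))$ no longer converges geometrically and the crude bound $r^{\alpha-2}$ fails, so one must genuinely exploit the strict decay $h(t) = o(t^{\alpha})$, which is precisely the extra information carried by the \emph{lower} $\alpha$-dimensional content and the reason Frostman's lemma is applied to $M_*^{\alpha}$ rather than to Hausdorff measure; this is the part of the argument I would expect to need the most care. I would also remark, finally, that since the Cauchy transform of a compactly supported measure decays only like $1/|z|$ — and it must, as the nonzero $z^{-1}$ coefficient is exactly the point — the function $g$ is not globally in $L^p(\mathbb{C})$ for $p < 2$, so the assertion ``$g \in \mathscr{L}^{p,\lambda}(\mathbb{C})$'' is to be understood, as in Kaufman's theorem and its relatives, as the statement that $g$ is locally $L^p$ with finite Campanato seminorm, which is all that is used in the proof of Theorem \ref{main}.
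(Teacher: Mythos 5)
Your proposal follows essentially the same route as the paper: a Frostman measure on $S$, its Cauchy transform, the near/far splitting over the doubled ball with $c = g_2(\text{center})$, the $p<2$ integrability of $|\zeta-z|^{-p}$ for the near part, and the kernel-difference bound combined with the growth condition $\nu(B_\rho)\lesssim \rho^{\alpha}$ for the far part (the paper uses H\"older plus Fubini where you use Minkowski's integral inequality, an immaterial difference). Your two closing observations are both apt and go slightly beyond the paper: at the endpoint $\alpha=2$ (i.e.\ $\lambda=2+p$) the dyadic sum in the far-part estimate does lose a logarithm, a point the paper's own write-up passes over silently, and the Cauchy transform of a compactly supported measure is not globally in $L^p(\mathbb{C})$, so the conclusion must be read as finiteness of the Campanato seminorm, which is exactly what the paper's proof establishes.
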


\bigskip
\noindent \textit{Note that the condition that $S$ has positive $1+\frac{\lambda-2}{p}$-measure is satisfied by subsets of $\mathbb{C}$ with positive area.}

\begin{proof}
The proof follows that in \cite{Kaufman}. To simplify computations in the proof we will rewrite $1+\frac{\lambda-2}{p}$ as $\frac{p+\lambda-2}{p}$. By Frostman's lemma there is a measure $\nu$ supported on $S$ such that $\nu(B(z,r)) \leq C r^{\frac{p+\lambda  -2}{p}}$ for every ball $B$ of radius $r>0$. Let 

\begin{align*}
    g(z) = \int (\zeta-z)^{-1} d\nu(\zeta).
\end{align*}

\bigskip
\noindent Then $g$ is analytic off $S$ and $g(z) = z^{-1} + \ldots$. To prove that $g \in \mathscr{L}^{p, \lambda}(\mathbb{C})$ let $B= B(w,r)$ be a ball centered at $w$ with radius $r$ and $B^* = B(w,2r)$. Now define

\begin{align*}
    g_1(z) = \int_{B^*} (\zeta-z)^{-1} d\nu(\zeta)
\end{align*}

\bigskip
\noindent and let $g_2(z) = g(z)-g_1(z)$.

\bigskip

Let $q = \frac{p}{p-1}$. Then it follows from H\"{o}lder's inequality and Fubini's theorem that 

\begin{align*}
    \int_B |g_1(z)|^p dA(z) &\leq \int_B \left( \int_{B^*} |\zeta-z|^{-p} d\nu(\zeta)\right) \left( \int_{B^*} 1 d\nu(\zeta) \right)^{\frac{p}{q}} dA(z)\\
    &\leq C r^{\frac{p+\lambda-2}{q}} \int_{B^*} \int_B |\zeta-z|^{-p} dA(z) d\nu(\zeta)\\
    &\leq C r^{\frac{p+\lambda-2}{q}} r^{\frac{p+\lambda-2}{p}}r^{2-p} = C r^{\lambda}.
\end{align*}

\bigskip

Moreover,

\begin{align*}
    \int_B |g_2(z)-g_2(w)|^p dA(z) &= \int_B \left| \int_{\mathbb{C}\setminus B^*} [(\zeta-z)^{-1}-(\zeta-w)^{-1}] d\nu(\zeta) \right|^p dA(z)\\
    &\leq \int_B \left( \int_{\mathbb{C}\setminus B^*} |\zeta-w|^{-1} d\nu(\zeta)\right)^p dA(z)\\
    &\leq (2r)^p\int_B \left( \int_{\mathbb{C}\setminus B^*} |\zeta-w|^{-2} d\nu(\zeta)\right)^p dA(z)\\
    &\leq (2r)^p \pi r^2 \left((2r)^{-2} (2r)^{\frac{p+\lambda-2}{p}}\right)^p = Cr^{\lambda}
\end{align*}

\bigskip
\noindent Thus 

\begin{align*}
    \int_B |g(z)-g_2(w)|^p dA(z) \leq C r^{\lambda}
\end{align*}

\bigskip
\noindent and hence $g \in \mathscr{L}^{p, \lambda}(\mathbb{C})$.
    
\end{proof}

\bigskip

Next we verify some important properties of a function that is crucial to the proof of the necessity of the criterion in Theorem \ref{main}.

\begin{lemma}
\label{lem1}
Let $X$ be a compact subset of $\mathbb{C}$ and suppose $1 \leq p <2$ and $2-p < \lambda \leq 2+p$. Let $A_n$ denote the annulus $\{2^{-(n+1)} \leq |z| \leq 2^{-n} \}$ and suppose $\nu_n$ is a measure on $A_n \setminus X$ with the following properties:

\begin{enumerate}
    \item $\nu_n(B_r) \leq \epsilon_n r^{1+ \frac{\lambda-2}{p}}$ for all balls $B$ with radius $r$.
    \bigskip
    \item $\int \nu_n = C \epsilon_n M_*^{1+ \frac{\lambda-2}{p}}(A_n \setminus X)$.
\end{enumerate}

\bigskip
\noindent Let $t$ be a non-negative integer and define

\begin{align*}
    f_n(z) = \int \left( \frac{\zeta}{|\zeta|} \right)^{t+1} \frac{d\nu_n(\zeta)}{\zeta-z}.
\end{align*}

\bigskip
\noindent Let $\displaystyle f_{n,B} = \frac{1}{|B|} \int_B |f_n| dA$. Then 

\begin{enumerate}
    \item $[f_n]_{\mathscr{L}^{p,\lambda}} \leq C \epsilon_n$.
    \bigskip
    \item If $B \subseteq A_k$ is a ball of radius $r$ and $k \neq n-1, n,$ or $n+1$, then

    \begin{align*}
       \left( \frac{1}{r^{\lambda}}  \int_B |f_n(z)|^p dA \right)^{\frac{1}{p}} \leq C \epsilon_n 2^{n(1+ \frac{\lambda-2}{p})} M_*^{1+ \frac{\lambda-2}{p}}(A_n \setminus X).
    \end{align*}
    \bigskip
     \item If $B \subseteq A_k$ is a ball of radius $r$ and $k \neq n-1, n,$ or $n+1$, then

    \begin{align*}
      \left(  \frac{1}{r^{\lambda}}  \int_B |f_{n,B}|^p dA \right)^{\frac{1}{p}}\leq C \epsilon_n 2^{n(1+ \frac{\lambda-2}{p})} M_*^{1+ \frac{\lambda-2}{p}} (A_n \setminus X).
    \end{align*}

    \bigskip
    \item If $||\cdot||_p$ denotes the $L^p$ norm on $\mathbb{C}$, then 

    \begin{align*}
        ||f_n||_p \leq C \epsilon_n 2^{n(1+ \frac{\lambda-2}{p})} M_*^{1+ \frac{\lambda-2}{p}} (A_n \setminus X).
    \end{align*}

\end{enumerate}

\end{lemma}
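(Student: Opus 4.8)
The plan is to estimate the function $f_n$ and its local averages directly from the Frostman-type bounds on $\nu_n$, splitting into the usual near/far pieces relative to a test ball $B$. For part (1), I would fix an arbitrary ball $B = B(w,r)$ and mimic the decomposition in the proof of Theorem \ref{analytic}: write $f_n = h_1 + h_2$ where $h_1(z) = \int_{B^*}(\zeta/|\zeta|)^{t+1}(\zeta-z)^{-1}\,d\nu_n(\zeta)$ with $B^* = B(w,2r)$, and $h_2 = f_n - h_1$. Since $|(\zeta/|\zeta|)^{t+1}| = 1$, the unimodular factor is harmless and the same H\"older--Fubini computation that gave $\int_B|g_1|^p \le Cr^\lambda$ now gives $\int_B|h_1|^p\,dA \le C\epsilon_n^p r^\lambda$ (the single power $\epsilon_n$ on $\nu_n(B)$ and the $\epsilon_n^{p/q}$ from the $L^q$-mass of $\nu_n$ on $B^*$ combine to $\epsilon_n^{1+p/q} = \epsilon_n^p$). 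Likewise the far piece satisfies $\int_B|h_2(z) - h_2(w)|^p\,dA \le C\epsilon_n^p r^\lambda$ by the mean-value/$|\zeta-w|^{-2}$ estimate exactly as before. Combining, $\int_B|f_n(z) - h_2(w)|^p\,dA \le C\epsilon_n^p r^\lambda$, so taking $c = h_2(w)$ in the infimum form of the Campanato seminorm yields $[f_n]_{\mathscr{L}^{p,\lambda}} \le C\epsilon_n$.

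For parts (2)--(4) I would exploit that when $B \subseteq A_k$ with $k \notin\{n-1,n,n+1\}$, the ball $B$ is \emph{disjoint} from a neighborhood of $A_n \setminus X$, so there is no singularity: for $z \in B$ and $\zeta \in A_n\setminus X$ one has $|\zeta - z| \ge c\,\mathrm{dist}(A_k, A_n) \ge c\,2^{-\min(n,k)} \ge c\,2^{-n}$ (using $k \neq n\pm1$ to get a definite gap). Hence $|f_n(z)| \le C\,2^{n}\,\nu_n(A_n\setminus X) \le C\epsilon_n 2^n M_*^{1+\frac{\lambda-2}{p}}(A_n\setminus X)$ for all $z \in B$, using property (2) of $\nu_n$. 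Then $\left(\frac{1}{r^\lambda}\int_B|f_n|^p\right)^{1/p} \le C\,|B|^{1/p} r^{-\lambda/p} \cdot \epsilon_n 2^n M_* = C\, r^{2/p - \lambda/p}\,\epsilon_n 2^n M_*$, and since $B\subseteq A_k$ forces $r \le 2^{-k}$ while the exponent $\frac{2-\lambda}{p}$ must be handled carefully — here is where $2-p < \lambda \le 2+p$ matters. Actually, the cleaner route: the pointwise bound $|f_n(z)| \le C\epsilon_n 2^n M_*$ on all of $A_k$ gives, for \emph{any} ball $B$ of radius $r$ in $A_k$, $\int_B |f_n|^p\,dA \le C(\epsilon_n 2^n M_*)^p r^2$, and one rewrites $r^2 = r^\lambda \cdot r^{2-\lambda}$; one then absorbs $r^{2-\lambda}$ against $2^{np}$ — wait, this does not obviously close, so I would instead observe $r^{2-\lambda} \le (2^{-k})^{2-\lambda}$ is not uniformly bounded. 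The correct fix is to note that the target bound has the factor $2^{n(1+\frac{\lambda-2}{p})}$, not $2^n$; so I should not use the crude $|\zeta-z|\ge c2^{-n}$ but rather keep the full kernel and redo the near-type estimate, getting $\left(\frac{1}{r^\lambda}\int_B|f_n|^p\right)^{1/p}\le C\epsilon_n\left(\frac{\nu_n(A_n\setminus X)}{2^{-n}}\right)^{?}$; more precisely, combining the pointwise bound $\|f_n\|_{L^\infty(A_k)} \le C\epsilon_n 2^n M_*$ with the seminorm bound from part (1) and interpolating via the definition of the Campanato norm on $B$ (using $f_{n,B}$ as the constant) gives the stated inequality after checking the power of $2^n$ works out — this bookkeeping of exponents is the main technical point.

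Part (3) then follows from part (2) almost immediately: $f_{n,B}$ is a constant (depending on $B$), and $f_{n,B} = \frac{1}{|B|}\int_B|f_n|\,dA \le \left(\frac{1}{|B|}\int_B|f_n|^p\,dA\right)^{1/p}$ by H\"older, so $\left(\frac{1}{r^\lambda}\int_B|f_{n,B}|^p\,dA\right)^{1/p} = |B|^{1/p}r^{-\lambda/p}\,f_{n,B} \le |B|^{1/p}r^{-\lambda/p}\left(\frac{1}{|B|}\int_B|f_n|^p\right)^{1/p} = \left(\frac{1}{r^\lambda}\int_B|f_n|^p\right)^{1/p}$, which is bounded by part (2). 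Part (4): I would cover $\mathbb{C}$ by the annuli $A_k$, bound $\int_{A_k}|f_n|^p\,dA$ using the pointwise estimate (which holds on $A_k$ for $k\neq n-1,n,n+1$) against $|A_k| \sim 2^{-2k}$, bound the three exceptional annuli $A_{n-1},A_n,A_{n+1}$ directly by the near/far splitting of part (1) scaled appropriately (here $\int_{A_n}|f_n|^p \le \sum$ over $O(1)$ balls of radius $\sim 2^{-n}$, each contributing $\le C\epsilon_n^p (2^{-n})^\lambda$), and bound the exterior region $|z| > 1$ using $|f_n(z)| \le C\nu_n(A_n\setminus X)/|z| \le C\epsilon_n 2^n M_* /|z|$, noting $p<2$ makes $\int_{|z|>1}|z|^{-p}$... divergent, so actually for the exterior one uses the faster decay $|f_n(z)| = |z|^{-1}\cdot O(\nu_n) $ isn't enough — instead recall $\int(\zeta/|\zeta|)^{t+1}d\nu_n$ need not vanish, so $f_n(z)\sim c/z$ at infinity and $\int_{|z|>1}|z|^{-p}\,dA = \infty$ when $p\le 2$; this forces me to also use that $\nu_n$ is supported in $A_n$ so that $|f_n(z)|\le C\nu_n(A_n\setminus X)\,\mathrm{dist}(z,A_n)^{-1}$, and the region $1<|z|$ contributes $\int_{|z|>1}\nu_n(A_n\setminus X)^p|z|^{-p}\,dA$ — still potentially divergent. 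The resolution must be that $f_n \in L^p$ is part of what we prove and the $M_*$ factor carries a power of $2^{-n}$ from property (2) combined with $M_*(A_n\setminus X)\le M_*(A_n) = C 2^{-n(1+\frac{\lambda-2}{p})}$, giving $\epsilon_n 2^{n(1+\frac{\lambda-2}{p})}M_* \le C\epsilon_n$ — so part (4)'s right-hand side is comparable to $\epsilon_n$ and the $L^p$ bound reduces to showing $\|f_n\|_p\le C\epsilon_n$, which I expect to follow by summing the annular contributions with the geometric decay in $|k-n|$. The main obstacle throughout is exponent bookkeeping — keeping precise track of the powers of $2^n$, $r$, and $\epsilon_n$ so that everything lands on the stated right-hand sides — together with handling the slow decay of $f_n$ at infinity when $p<2$; I would address the latter by using the support constraint on $\nu_n$ and the a priori bound $M_*^{1+\frac{\lambda-2}{p}}(A_n\setminus X) \le C\,2^{-n(1+\frac{\lambda-2}{p})}$.
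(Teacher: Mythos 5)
Your part (1) follows the paper exactly (the $g_1,g_2$ splitting from Theorem \ref{analytic} with the extra $\epsilon_n^{1+p/q}=\epsilon_n^p$), and your reduction of part (3) to part (2) via Jensen, $\int_B|f_{n,B}|^p\,dA=|B|\,f_{n,B}^p\le\int_B|f_n|^p\,dA$, is correct and in fact cleaner than the paper, which recomputes the integral. The genuine gap is part (2). You correctly diagnose that the crude pointwise bound $|f_n(z)|\le C2^n\nu_n(\mathbb{C})$ does not close (it produces $2^n$ rather than $2^{n(1+\frac{\lambda-2}{p})}$ and a stray power of $r$), but you never replace it with a working estimate: your proposed fix ends in a literal unknown exponent and the phrase ``after checking the power of $2^n$ works out,'' which is precisely the step that needs to be carried out. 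The missing device is a H\"older splitting of the Cauchy kernel itself: write $|\zeta-z|^{-1}=|\zeta-z|^{-\frac{2-\lambda}{p}}\cdot|\zeta-z|^{-\frac{p+\lambda-2}{p}}$, apply H\"older with exponents $p$ and $q=\frac{p}{p-1}$, use the separation $|\zeta-z|\ge 2^{-(n+2)}$ \emph{only} on the $q$-factor to get $\bigl(\int|\zeta-z|^{-\frac{p+\lambda-2}{p-1}}d\nu_n\bigr)^{p/q}\le C2^{n(p+\lambda-2)}(\epsilon_nM_*)^{p/q}$, and handle the $p$-factor by Fubini together with $\int\!\!\int_B|\zeta-z|^{\lambda-2}\,dA(z)\,d\nu_n(\zeta)\le Cr^\lambda\epsilon_nM_*$; taking $p$-th roots then lands exactly on $C\epsilon_n2^{n(1+\frac{\lambda-2}{p})}M_*^{1+\frac{\lambda-2}{p}}(A_n\setminus X)$. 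Without this (or an equivalent) the central estimate of the lemma is unproved.

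Part (4) has a related unresolved issue. Your observation that $f_n(z)\sim c/z$ at infinity with $c=\int(\zeta/|\zeta|)^{t+1}d\nu_n\ne 0$ in general, so that $\int_{|z|>1}|z|^{-p}\,dA$ diverges for $p<2$, is a legitimate and sharp objection to an $L^p(\mathbb{C})$ bound --- but you leave it hanging with ``the resolution must be that\dots'' and an appeal to geometric decay that does not address the tail at infinity. The resolution is that only the $L^p$ norm over the compact set $X$ is actually needed (this is what enters $\|f\|_{\mathscr{L}^{p,\lambda}(X)}=[f]_{\mathscr{L}^{p,\lambda}(X)}+\|f\|_{L^p(X)}$ in the necessity argument, and it is all the paper proves): one applies H\"older and Fubini as in part (2) and then uses $\int_{B_R(\zeta)}|\zeta-z|^{-p}\,dA(z)\le CR^{2-p}$ for a fixed ball $B_R(\zeta)\supseteq X$, which is where the hypothesis $p<2$ enters. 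As written, your argument for part (4) identifies the obstacle but does not overcome it.
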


\begin{proof}
   As in the previous proof, to simplify calculations we will write $1+\frac{\lambda-2}{p}$ as $\frac{p+\lambda-2}{p}$. Let $q = \frac{p}{p-1}$. The proof of the first proposition is done is the same way as the proof of Theorem \ref{analytic}. To prove the second proposition we first note that it follows from H\"{o}lder's inequality that

\begin{align}
\label{eq1}
    \int_B |f_n(z)|^p dA(z) &\leq \int_B \left(\int \frac{d\nu_n(\zeta)}{|\zeta-z|} \right)^p dA(z) \\
    &= \int_B \left(\int \frac{d\nu_n(\zeta)}{|\zeta-z|^{\frac{2-\lambda}{p}} |\zeta-z|^{\frac{p+\lambda-2}{p}} } \right)^p dA(z) \nonumber \\
    &\leq \int_B \left(\int \frac{d\nu_n(\zeta)}{|\zeta-z|^{2-\lambda}  } \right) \left(\int \frac{d\nu_n(\zeta)}{|\zeta-z|^{\frac{p+\lambda-2}{p-1}} } \right)^{\frac{p}{q}}  dA(z). \nonumber
\end{align}

\bigskip
\noindent We first evaluate the second inner integral. Because $\nu_n$ has support on $A_n \setminus X$ and $k \neq n-1, n,$ or $n+1$, it follows that $|\zeta-z| \geq 2^{-(n+2)}$ and hence

\begin{align*}
    \left(\int \frac{d\nu_n(\zeta)}{|\zeta-z|^{\frac{p+\lambda-2}{p-1}} } \right)^{\frac{p}{q}} \leq C 2^{n(p+\lambda-2)} \left(\epsilon_n M_*^{\frac{p+\lambda-2}{p}}(A_n \setminus X) \right)^{\frac{p}{q}}.
\end{align*}

\bigskip
\noindent For the remaining two integrals, we can use Fubini's theorem to interchange the order of integration. Hence

\begin{align*}
    \int_B |f_n(z)|^p dA(z) &\leq C 2^{n(p+ \lambda -2)} \left(\epsilon_n M_*^{\frac{p+ \lambda -2}{p}}(A_n \setminus X) \right)^{\frac{p}{q}} \int \int_B |\zeta-z|^{\lambda-2} dA(z) d\nu_n(\zeta)\\
    &\leq C 2^{n(p+ \lambda -2)} \left(\epsilon_n M_*^{\frac{p+ \lambda -2}{p}}(A_n \setminus X) \right)^{\frac{p}{q}} r^{\lambda} \epsilon_n M_*^{\frac{p+ \lambda -2}{p}}(A_n \setminus X).
\end{align*}

\bigskip
\noindent Thus

\begin{align*}
  \left( \frac{1}{r^{\lambda}} \int_B |f_n(z)|^p dA(z)\right)^{\frac{1}{p}} &\leq C 2^{n(\frac{p+\lambda-2}{p})} \left(\epsilon_n M_*^{\frac{p+\lambda-2}{p}}(A_n \setminus X) \right)^{\frac{1}{q}} \left(\epsilon_n M_*^{\frac{p+\lambda-2}{p}}(A_n \setminus X) \right)^{\frac{1}{p}}\\
  &= C \epsilon_n 2^{n(\frac{p+\lambda-2}{p})} M_*^{\frac{p+\lambda-2}{p}}(A_n \setminus X).
\end{align*}

\bigskip

Similarly, we prove the third proposition. We first observe that

\begin{align*}
    \int_B |f_{n,B}|^p dA &= \int_B \left| \frac{1}{|B|} \int_B f_n(z) dA(z) \right|^p dA\\
    &\leq C r^{2-2p} \left(\int_B \int \frac{d\nu_n(\zeta)}{|\zeta-z|} dA(z) \right)^p.
\end{align*}

\bigskip
\noindent Then by applying H\"{o}lder's inequality  we obtain

\begin{align*}
    r^{2-2p} \left(\int_B \int \frac{d\nu_n(\zeta)}{|\zeta-z|} dA(z) \right)^p &\leq r^{2-2p} \left(\int_B \left(\int \frac{d\nu_n(\zeta)}{|\zeta-z|}  \right)^p  dA(z)\right) \left( \int_B 1 dA \right)^{\frac{p}{q}}\\
    &= C \int_B \left(\int \frac{d\nu_n(\zeta)}{|\zeta-z|}  \right)^p  dA(z).
\end{align*}

\bigskip
\noindent Since this is the same integral as in \eqref{eq1}, repeating the same calculations yields 

\begin{align*}
  \left( \frac{1}{r^{\lambda}} \int_B |f_{n,B}|^p dA(z)\right)^{\frac{1}{p}} \leq C \epsilon_n 2^{n(\frac{p+\lambda-2}{p})} M_*^{\frac{p+\lambda-2}{p}}(A_n \setminus X).
\end{align*}

  \bigskip

  To prove the fourth proposition, we use the same techniques as before. By H\"{o}lder's inequality

  \begin{align*}
      \int_X |f_n|^p dA(z) &\leq \int_X \left(\int \frac{d\nu_n(\zeta)}{|\zeta-z|}\right)^p dA(z)\\
      &\leq \int_X \left(\int \frac{d\nu_n(\zeta)}{|\zeta-z|^p}\right) \left(\int 1 d\nu_n\right)^{\frac{p}{q}} dA(z)\\
      &= C \left(\epsilon_n M_*^{\frac{p+\lambda-2}{p}}(A_n \setminus X) \right)^{\frac{p}{q}} \int_X \left(\int \frac{d\nu_n(\zeta)}{|\zeta-z|^p}\right) dA(z).
  \end{align*}

   Then by Fubini's theorem,

  \begin{align*}
      \int_X |f_n|^p dA(z) &\leq C \left(\epsilon_n M_*^{\frac{p+\lambda-2}{p}}(A_n \setminus X) \right)^{\frac{p}{q}} \int \int_X \frac{1}{|\zeta-z|^p} dA(z) d\nu_n(\zeta).
  \end{align*}

  \bigskip
  \noindent Let $B_r(\zeta)$ be a ball of radius $r$ centered at $\zeta$ such that $X \subseteq B_r(\zeta)$. Then since $p<2$,

\begin{align*}
      \int \int_X \frac{1}{|\zeta-z|^p} dA(z) d\nu_n &\leq \int \int_{B_r(\zeta)} \frac{1}{|\zeta-z|^p} dA(z) d\nu_n(\zeta)\\
      &\leq C  \epsilon_n M_*^{\frac{p+\lambda-2}{p}}(A_n \setminus X).
  \end{align*}

  \bigskip
  \noindent Thus 

  \begin{align*}
      \left(\int_X |f_n|^p dA(z)\right)^{\frac{1}{p}} &\leq C \left(\epsilon_n M_*^{\frac{p+\lambda-2}{p}}(A_n \setminus X) \right)^{\frac{1}{q}} \left(\epsilon_n M_*^{\frac{p+\lambda-2}{p}}(A_n \setminus X) \right)^{\frac{1}{p}}\\
  &= C \epsilon_n  M_*^{\frac{p+\lambda-2}{p}}(A_n \setminus X).
  \end{align*}
    
\end{proof}

\section{Proof of Sufficiency}

We first prove the case of sufficiency in Theorem \ref{main}.

\begin{theorem}
Suppose that $X$ is a compact subset of $\mathbb{C}$ and let $A_n(x)$ denote the annulus $\{ 2^{-(n+1)} \leq |z-x| \leq 2^{-n} \}$. Let $t$ be a non-negative integer, $1 \leq p < \infty$, and let $\lambda \geq 0$ also satisfy $2-p < \lambda < 2+p$. If

\begin{align*}
    \sum_{n=0}^{\infty} 2^{(t+1)n} M_*^{1+ \frac{\lambda-2}{p}}(A_n(x) \setminus X) < \infty,
\end{align*}

\bigskip
\noindent then $A_{p, \lambda}(X)$ admits a $t$-th order bounded point derivation at $x$.
    
\end{theorem}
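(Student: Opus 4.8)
The plan is to translate so that $x=0$, decompose $f$ into annular pieces adapted to the gaps $A_n(0)\setminus X$, estimate the contribution of each piece to $f^{(t)}(0)$, and sum. Write $r_n=2^{-n}$ and $\alpha=1+\tfrac{\lambda-2}{p}$; the hypothesis $2-p<\lambda<2+p$ gives $0<\alpha<2$. Using the Campanato embedding $\mathscr{L}^{p,\lambda}(\mathbb{C})=\mathscr{L}^{p_1,\lambda_1}(\mathbb{C})$ we may assume $1\le p<2$, so that the H\"older--Fubini estimates in the proofs of Theorem~\ref{analytic} and Lemma~\ref{lem1} are available, and, rescaling, that $X\subseteq\{|z|\le\tfrac12\}$. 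Since under the standing hypotheses $f^{(t)}(0)$ depends only on $f|_X$, it suffices to bound $|f^{(t)}(0)|$ by $\|f\|_{\mathscr{L}^{p,\lambda}(X)}$; keeping track that the estimates below ultimately involve only data attached to $X$ (and hence pass to the infimum over extensions) is part of the bookkeeping.

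First I would set up a Vitushkin-type localization. Fix a partition of unity $1=\varphi_{-1}+\sum_{n\ge0}\varphi_n$ of $\mathbb{C}$ with $\varphi_n$ supported in the fattened annulus $\widetilde A_n=\{2^{-(n+2)}\le|z|\le2^{-(n-1)}\}$, $\|\bar\partial\varphi_n\|_\infty\le C\,2^{n}$, bounded overlap, and $\varphi_{-1}$ supported in $\{|z|\ge\tfrac12\}$; set $g_n=T_{\varphi_n}f$, the Vitushkin localization of $f$. Then $\bar\partial g_n=\varphi_n\,\bar\partial f$, so $g_n$ is analytic off the compact set $E_n:=\operatorname{supp}\varphi_n\cap(\mathbb{C}\setminus U)\subseteq\widetilde A_n\setminus X$ (where $U\supseteq X$ is a neighborhood on which $f$ is analytic), analytic at $0$ and at $\infty$ with $g_n(z)=O(1/z)$, and $\sum_{n\ge-1}g_n=f$ on a neighborhood of $0$. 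The key structural input at this stage is that the localization operator is bounded on $V\mathscr{L}^{p,\lambda}(\mathbb{C})$, so that the Campanato seminorm and $L^p$ behavior of each $g_n$ are controlled uniformly in $n$ by those of $f$; this reduces, via the embedding, to the previously studied Lipschitz, $BMO$, and negative Lipschitz cases. The piece $g_{-1}$ is harmless: $0$ lies at distance $\ge\tfrac12$ from $E_{-1}$, and $g_{-1}$ is analytic near $0$, so $|g_{-1}^{(t)}(0)|\le C\|f\|$ follows from interior estimates.

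The heart of the proof is the per-annulus estimate
\[
|g_n^{(t)}(0)|\ \le\ C\,\|f\|_{\mathscr{L}^{p,\lambda}(X)}\,2^{(t+1)n}\,M_*^{\alpha}(E_n).
\]
To prove it, choose $\eta_n\in C^\infty$ with $\eta_n\equiv1$ on a neighborhood of $E_n$ and supported in a thin neighborhood of $E_n$; then $(1-\eta_n)g_n$ agrees with $g_n$ near $0$, and since $1-\eta_n$ vanishes near $\operatorname{supp}\bar\partial g_n$ its $\bar\partial$-derivative is the bounded compactly supported function $-g_n\,\bar\partial\eta_n$, so the Cauchy--Green formula gives $g_n^{(t)}(0)=\tfrac{t!}{\pi}\int \zeta^{-(t+1)}g_n(\zeta)\,\bar\partial\eta_n(\zeta)\,dA(\zeta)$; equivalently $g_n$ is the Cauchy transform of a measure $\mu_n$ carried by $E_n$, with $g_n^{(t)}(0)=t!\int\zeta^{-(t+1)}\,d\mu_n$. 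Dualizing the H\"older--Fubini computations used in the proofs of Theorem~\ref{analytic} and Lemma~\ref{lem1} --- now with the roles of the measure and of the test function interchanged --- and using that $f$ lies in the vanishing space, one obtains a growth bound $|\mu_n|(B_r)\le C\,\|f\|_{\mathscr{L}^{p,\lambda}(X)}\,h_n(r)$ for an admissible measure function $h_n$. Since $|\zeta|\ge2^{-(n+2)}$ on $E_n$ this yields $|g_n^{(t)}(0)|\le C\|f\|\,2^{(t+1)n}|\mu_n|(E_n)$, and for any covering of $E_n$ by balls $B_{r_j}$, $|\mu_n|(E_n)\le\sum_j|\mu_n|(B_{r_j})\le C\|f\|\sum_j h_n(r_j)$, so $|\mu_n|(E_n)\le C\|f\|\,M^{h_n}(E_n)\le C\|f\|\,M_*^{\alpha}(E_n)$. (For $t=0$ the constant coming from $(1-\eta_n)(0)=1$ must be handled separately, using the $L^p(X)$-part of the norm, as in the $BMO$ case.)

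Finally, using $E_n\subseteq\widetilde A_n\setminus X$, monotonicity of $M_*^{\alpha}$, and the comparison $M_*^{\alpha}(\widetilde A_n\setminus X)\le C\sum_{|k-n|\le1}M_*^{\alpha}(A_k(0)\setminus X)$, summing the per-annulus estimate over $n$ (together with the bound on $g_{-1}$) yields
\[
|f^{(t)}(0)|\ \le\ C\,\|f\|_{\mathscr{L}^{p,\lambda}(X)}\sum_{n}2^{(t+1)n}M_*^{\alpha}(A_n(0)\setminus X)\ <\ \infty ,
\]
with $C$ independent of $f$; the absolute convergence here also justifies the term-by-term differentiation $f^{(t)}(0)=\sum_{n\ge-1}g_n^{(t)}(0)$ used above, and this is precisely the asserted $t$-th order bounded point derivation. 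I expect the main obstacle to be the per-annulus estimate: obtaining the \emph{content} $M_*^{\alpha}(E_n)$ --- rather than, say, the area of $E_n$ or a worse power --- with the sharp factor $2^{(t+1)n}$ forces one to exploit the analyticity of $g_n$ off $E_n$ through a Frostman measure and the covering duality for Hausdorff content, and it is exactly there that the vanishing hypothesis on $f$ is needed (to make the growth function $h_n$ admissible for the definition of lower $\alpha$-dimensional content). A secondary technical ingredient is the boundedness of the Vitushkin localization operator on the Campanato space.
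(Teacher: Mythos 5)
Your overall architecture (localize $f$ to the annuli, bound each piece's contribution to $f^{(t)}(0)$ by $2^{(t+1)n}M_*^{\alpha}(A_n\setminus X)$ using the vanishing hypothesis to make $h(t)=t^{\alpha}\Omega_f^{p,\lambda}(ct)$ admissible, then sum) is the right one and is in the same Vitushkin-scheme family as the paper's proof. But the step you yourself flag as ``the main obstacle'' --- the per-annulus estimate --- is exactly the step you have not supplied, and the mechanism you propose for it does not work as stated. The measure $\mu_n=-\tfrac1\pi g_n\,\bar\partial\eta_n\,dA$ is not carried by $E_n$: it is carried by the support of $\bar\partial\eta_n$, a neighborhood of $E_n$, and as that neighborhood shrinks $\bar\partial\eta_n$ blows up, so no uniform Frostman bound $|\mu_n|(B_r)\le C\,h_n(r)$ with a single admissible $h_n$ comes for free. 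Worse, $\bar\partial g_n=\varphi_n\bar\partial f$ is only a distribution, not a measure, for $f$ in a Campanato class, so there is no genuine measure on $E_n$ to which the covering duality for Hausdorff content can be applied. ``Dualizing the H\"{o}lder--Fubini computations with the roles of measure and test function interchanged'' is precisely the content of the theorem at this point, not a routine adaptation of Lemma \ref{lem1}.

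The paper extracts the content of $K_n\subseteq A_n\setminus X$ by a more concrete device: cover $K_n$ by dyadic squares $Q_j$ of side $r_j$, build bumps $\phi_j$ supported in $Q_j^*=\tfrac32 Q_j$ with $\|\nabla\phi_j\|_\infty\le Cr_j^{-1}$, convert the Cauchy integral over $\partial D_n$ into $\sum_j\int_{Q_j^*}f(z)(z-x)^{-(t+1)}\,\bar\partial\phi_j\,dA$, use $\int_{Q_j^*}(z-x)^{-(t+1)}\bar\partial\phi_j\,dA=0$ to replace $f$ by $f-f_{Q_j^*}$, and then apply H\"{o}lder on each square to get a bound by $2^{n(t+1)}\sum_j r_j^{1+\frac{\lambda-2}{p}}\Omega_f^{p,\lambda}(\tfrac32 r_j)=2^{n(t+1)}\sum_j h(r_j)$; taking the infimum over covers then produces $M^h(K_n)\le M_*^{\alpha}(K_n)$. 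In other words, the Frostman-type duality is implemented cover-by-cover at the scale of the squares, not through a single measure on $E_n$; this is the missing idea in your write-up. Two secondary points: the boundedness of the Vitushkin operator $T_{\varphi_n}$ on $V\mathscr{L}^{p,\lambda}(\mathbb{C})$, which your scheme needs uniformly in $n$, is a nontrivial assertion you do not prove and the paper's argument avoids entirely (it works with $f$ itself and the Cauchy formula on $\partial D_n$); and the reduction to $1\le p<2$ via the Campanato embedding is unnecessary for sufficiency --- the H\"{o}lder step above works for every $p\in[1,\infty)$ --- that reduction is only needed in the necessity direction.
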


\begin{proof}
    We first choose $f \in A_{p,\lambda}(X)$ so that $[f]_{\mathscr{L}^{p,\lambda}(X)} = 1$. Choose $K_n \subseteq A_n \setminus X$ so that $f$ is analytic on $A_n \setminus K_n$. Fix $n$ and let $\{Q_j\}$ be a covering of $K_n$ by dyadic squares with no overlap except at the boundaries and let $r_j$ denote the side length of $Q_j$. Let $Q_j^* = \frac{3}{2} Q_j$, the square with side length $\frac{3}{2}r_j$ and the same center as $Q_j$, and let $D_n = \bigcup Q_j^*$. Then it follows from the Cauchy integral formula that

\begin{align*}
    f^{(t)}(x) = \frac{t!}{2 \pi i} \sum_n \int_{\partial D_n} \frac{f(z)}{(z-x)^{t+1}} dz.
\end{align*}

\bigskip
\noindent For each square $Q_j$ construct a smooth function $\phi_j$ with support on $Q_j^*$ such that $||\nabla \phi_j||_{\infty} \leq C r_j^{-1}$, and $\sum_j \phi_j = 1$ on a neighborhood of $\bigcup Q_j$. Such a construction is found in \cite[Theorem 3.1]{Harvey}. Let $\phi = 1- \sum_j \phi_j$. Then $\phi = 1$ on $\partial D_n$ and by Green's Theorem,

\begin{align*}
    \left|\frac{t!}{2 \pi i}  \int_{\partial D_n} \frac{f(z)}{(z-x)^{t+1}} dz\right| &= \left|\frac{t!}{2 \pi i} \int_{\partial D_n} \frac{f(z) \phi(z)}{(z-x)^{t+1}} dz\right|\\
    &= \left|\frac{t!}{\pi}  \int_{ D_n} \frac{f(z) }{(z-x)^{t+1}} \frac{\partial \phi}{\partial \overline{z}}dA\right|\\
     &\leq \frac{t!}{\pi} \sum_j \left| \int_{Q_j^*} \frac{f(z) }{(z-x)^{t+1}} \frac{\partial \phi_j}{\partial \overline{z}}dA \right|.
\end{align*}

\bigskip
\noindent

Moreover, $\int_{Q_j^*} (z-x)^{-(t+1)} \frac{\partial \phi_j}{\partial \overline{z}} dA = \int_{\partial Q_j^*} (z-x)^{-(t+1)} \phi_j(z) dz = 0$, and hence

\begin{align*}
    \sum_j \left| \int_{Q_j^*} \frac{f(z) }{(z-x)^{t+1}} \frac{\partial \phi_j}{\partial \overline{z}}dA \right| &\leq  \sum_j  \int_{Q_j^*} \frac{|f(z) - f_{Q_j^*}| }{|z-x|^{t+1}} \left|\frac{\partial \phi_j}{\partial \overline{z}}\right|dA. 
\end{align*}

\bigskip
\noindent Thus by H\"{o}lder's inequality,

\begin{align*}
    \sum_j  \int_{Q_j^*} \frac{|f(z) - f_{Q_j^*}| }{|z-x|^{t+1}} \left|\frac{\partial \phi_j}{\partial \overline{z}}\right|dA &\leq \sum_j  2^{n(t+1)} \left(\int_{Q_j^*} |f(z)-f_{Q_j^*}|^p dA\right)^{\frac{1}{p}} \left( \int_{Q_j^*} \left|\frac{\partial \phi_j}{\partial \overline{z}}\right|^q dA \right)^{\frac{1}{q}}\\
    &\leq C 2^{n(t+1)} \sum_j r_j^{1+ \frac{\lambda-2}{p}} \left( \frac{1}{r^{\lambda}}\int_{Q_j^*} |f(z)-f_{Q_j^*}|^p dA\right)^{\frac{1}{p}}.
\end{align*}

\bigskip
\noindent Since $f \in V\mathscr{L}^{p, \lambda}(\mathbb{C})$ it follows that $h(t) = t^{1+ \frac{\lambda-2}{p}} \Omega_f^{p, \lambda}(\frac{3}{2}t)$ is admissible for $M_*^{1+ \frac{\lambda-2}{p}}(K_n)$. Hence by taking the infimum over all such covers $\{Q_j\}$ we have that

\begin{align*}
    \left|\frac{t!}{2 \pi i}  \int_{\partial D_n} \frac{f(z)}{(z-x)^{t+1}} dz\right| \leq C 2^{n(t+1)} M_*^{1+ \frac{\lambda-2}{p}}(K_n).
\end{align*}

\bigskip
\noindent Since $M_*^{1+ \frac{\lambda-2}{p}}$ is monotone, it follows that

\begin{align*}
    |f^{(t)}(x)| &\leq C \sum_{n=1}^{\infty} 2^{n(t+1)}M_*^{1+ \frac{\lambda-2}{p}}(K_n)\\
    &\leq C \sum_{n=1}^{\infty} 2^{n(t+1)}M_*^{1+ \frac{\lambda-2}{p}}(A_n \setminus X)\\
    &\leq C.
\end{align*}

\bigskip
\noindent Now suppose $g \in A_{p,\lambda}(X)$ is analytic on $X$ and let $f = \frac{g}{[g]_{\mathscr{L}^{p,\lambda}(X)}}$. Then $[f]_{\mathscr{L}^{p,\lambda}(X)} = 1$ and hence $|f^{(t)}(x)| \leq C$. Thus $|g^{(t)}(x)| \leq C [g]_{\mathscr{L}^{p, \lambda}(X)}$ and $A_{p, \lambda}(X)$ admits a $t$-th order bounded point derivation at $x$.

\end{proof}

\section{Proof of Necessity}

Finally, we prove the case of necessity in Theorem \ref{main}

\begin{theorem}
\label{necessity}
    Suppose $X$ is a compact subset of $\mathbb{C}$ and let $A_n(x)$ denote the annulus $\{2^{-(n+1)} \leq |z-x| \leq 2^{-n}\}$. Let $t$ be a non-negative integer, $1 \leq p < \infty$, and let $\lambda \geq 0$ also satisfy $2-p < \lambda < 2+p$. If $A_{p, \lambda}(X)$ admits a $t$-th order bounded point derivation at $x$, then

\begin{align*}
    \sum_{n=1}^{\infty} 2^{(t+1)n} M_*^{1+ \frac{\lambda-2}{p}}(A_n(x) \setminus X) < \infty.
\end{align*}
\end{theorem}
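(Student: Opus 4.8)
The plan is to argue by contradiction in the standard way: assuming the sum diverges, I will construct a sequence of functions in $A_{p,\lambda}(X)$ whose $t$-th derivatives at $x$ blow up while their Campanato norms stay bounded, contradicting the existence of the bounded point derivation. Without loss of generality take $x=0$. If $\sum_n 2^{(t+1)n} M_*^{1+\frac{\lambda-2}{p}}(A_n\setminus X) = \infty$, then by splitting into the cases $1\le p<2$ and $p\ge 2$ (using the Campanato embedding property $\mathscr{L}^{p,\lambda}=\mathscr{L}^{p_1,\lambda_1}$ whenever $\frac{\lambda-2}{p}=\frac{\lambda_1-2}{p_1}$, one may reduce to $1\le p<2$, which is exactly the range in which Lemma \ref{lem1} and Theorem \ref{analytic} are stated). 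For each $n$, apply Frostman's lemma to $A_n\setminus X$ to obtain a measure $\nu_n$ with $\nu_n(B_r)\le \epsilon_n r^{1+\frac{\lambda-2}{p}}$ and $\int\nu_n = C\epsilon_n M_*^{1+\frac{\lambda-2}{p}}(A_n\setminus X)$, and form $f_n(z) = \int (\zeta/|\zeta|)^{t+1}(\zeta-z)^{-1}\,d\nu_n(\zeta)$ as in Lemma \ref{lem1}. The weight $(\zeta/|\zeta|)^{t+1}$ is chosen so that the leading behavior of $f_n$ near $0$ contributes positively to $f_n^{(t)}(0)$: indeed $f_n^{(t)}(0) = t!\int (\zeta/|\zeta|)^{t+1}\zeta^{-(t+1)}\,d\nu_n(\zeta) = t!\int |\zeta|^{-(t+1)}\,d\nu_n(\zeta)$, which is real, positive, and comparable to $2^{(t+1)n}\int\nu_n \sim \epsilon_n 2^{(t+1)n} M_*^{1+\frac{\lambda-2}{p}}(A_n\setminus X)$.

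Next I would assemble the $f_n$ into a single function. Choose the Frostman normalization constants $\epsilon_n$ (this is where the divergence hypothesis is used) so that $\sum_n \epsilon_n^{-1}$ — or whatever the correct combination is — converges while $\sum_n f_n^{(t)}(0)$ still diverges; concretely, one wants to set $F = \sum_n f_n$ (possibly with small coefficients $c_n$) so that $[F]_{\mathscr{L}^{p,\lambda}}$ is controlled but $F^{(t)}(0) = \sum_n f_n^{(t)}(0) = \infty$. The norm estimate is the heart of the matter and is precisely what Lemma \ref{lem1} is built to supply: part (1) gives $[f_n]_{\mathscr{L}^{p,\lambda}}\le C\epsilon_n$, and parts (2),(3),(4) control the contribution of $f_n$ to the Campanato seminorm of a ball $B\subseteq A_k$ for $k\notin\{n-1,n,n+1\}$ by $C\epsilon_n 2^{n(1+\frac{\lambda-2}{p})} M_*^{1+\frac{\lambda-2}{p}}(A_n\setminus X)$. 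So for a ball $B\subseteq A_k$, the seminorm of $\sum_n c_nf_n$ on $B$ splits into the three "near" terms $n=k-1,k,k+1$ (bounded using part (1)) and a tail $\sum_{|n-k|\ge 2} c_n \cdot(\text{part (2)--(3) bound})$; one must verify this tail sum, together with the choice of $c_n$, is uniformly bounded in $k$. Since $2^{n(1+\frac{\lambda-2}{p})} M_*^{1+\frac{\lambda-2}{p}}(A_n\setminus X)\le C\cdot 2^{n(1+\frac{\lambda-2}{p})-(t+1)n}\cdot\big(2^{(t+1)n}M_*^{1+\frac{\lambda-2}{p}}(A_n\setminus X)\big)$ and the last factor is a term of a divergent series, one needs $1+\frac{\lambda-2}{p} < t+1$ so that the geometric factor decays — and this is exactly $\lambda < 2+p$ when $t=0$, with more room for larger $t$; this is the role of the hypothesis $\lambda<2+p$. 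Finally one checks $F=\sum_n c_nf_n$ is in $L^p$ via part (4) and that it is analytic off $\bigcup_n(A_n\setminus X)\subseteq \mathbb{C}\setminus X$, hence analytic on a neighborhood of $X$... wait — $F$ is analytic off the support of $\sum\nu_n$, which lies in $\mathbb{C}\setminus X$, so $F$ is analytic on an open set containing $X$, i.e. $F\in A_{p,\lambda}(X)$ after also confirming the vanishing-oscillation condition $\Omega_F^{p,\lambda}(\delta)\to 0$.

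I anticipate the main obstacle to be this last bookkeeping step: verifying that the infinite sum $F=\sum c_n f_n$ genuinely lies in the \emph{vanishing} Campanato space $V\mathscr{L}^{p,\lambda}(\mathbb{C})$ and is analytic in a full neighborhood of $X$, not merely on $\mathbb{C}\setminus X$. The analyticity is delicate because the supports of the $\nu_n$ accumulate at $x\in\partial X$, so the "neighborhood of $X$" on which $F$ is analytic shrinks near $x$; one likely must instead truncate — work with partial sums $F_N = \sum_{n\le N} c_nf_n$, which are honestly analytic near $X$, show $F_N^{(t)}(0)\to\infty$, and bound $[F_N]_{\mathscr{L}^{p,\lambda}(X)}$ uniformly in $N$, thereby contradicting the bounded point derivation inequality $|F_N^{(t)}(0)|\le C\|F_N\|_{\mathscr{L}^{p,\lambda}(X)}$ directly without needing to pass to the limit function at all. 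The vanishing condition for each $F_N$ is then automatic since it is a finite sum of functions each of which, being analytic off a compact set and in $\mathscr{L}^{p,\lambda}$, lies in $V\mathscr{L}^{p,\lambda}$ (the oscillation of $(\zeta-z)^{-1}$-type kernels over small balls vanishes). The secondary obstacle is pinning down the exact relationship between the $\epsilon_n$, the $c_n$, and the divergent series so that the "blow up versus bounded norm" trade-off closes; this is a purely arithmetic optimization once the estimates of Lemma \ref{lem1} are in hand, and I would handle it by setting $c_n$ proportional to a slowly-growing truncation of $1/\epsilon_n$ and invoking the standard lemma that a nonnegative series diverges iff it does so after multiplication by a suitable sequence tending to $0$.
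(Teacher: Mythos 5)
Your proposal follows essentially the same route as the paper's proof: the contrapositive, Frostman measures on $A_n\setminus X$ with the damping sequence $\epsilon_n$, the kernels $f_n(z)=\int(\zeta/|\zeta|)^{t+1}(\zeta-z)^{-1}\,d\nu_n$ whose norms are controlled by Lemma \ref{lem1} (near annuli via part (1), far annuli via parts (2)--(4)), the reduction of $p\ge 2$ to $p<2$ by the Campanato embedding, and — exactly as you anticipate in your ``truncation'' fix — finite blocks of the series to keep each test function analytic in a genuine neighborhood of $X$ (the paper uses tail blocks $g_m=\sum_{n=m}^{M}f_n$ normalized so that $\|g_m\|_{\mathscr{L}^{p,\lambda}}\to 0$ while $g_m^{(t)}(0)$ stays bounded below, which is equivalent to your ``derivatives blow up, norms bounded'' formulation). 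The only quibble is your exponent bookkeeping: the decay condition is $\frac{\lambda-2}{p}<t$, i.e.\ $\lambda<2+tp$, not $\lambda<2+p$ at $t=0$ — a point on which the paper's own displayed estimate is equally loose.
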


\begin{proof}

We will verify the theorem by proving the contrapositive. Furthermore, we can assume that $x=0$ and that $X$ is contained entirely within the closed unit disk. First suppose that $p<2$. If 

\begin{align*}
    \sum_{n=1}^{\infty} 2^{n(t+1)}M_*^{1+ \frac{\lambda-2}{p}}(A_n \setminus X) = \infty,
\end{align*}

\bigskip
\noindent then we can find a decreasing sequence $\epsilon_n \to 0$ such that

\begin{align*}
    \sum_{n=1}^{\infty} 2^{n(t+1)} \epsilon_n M_*^{1+ \frac{\lambda-2}{p}}(A_n \setminus X) = \infty,
\end{align*}

\bigskip
\noindent and $2^{n(t+1)} \epsilon_n M_*^{1+ \frac{\lambda-2}{p}}(A_n \setminus X) \leq 1$ for all $n$.

\bigskip
\noindent By Frostman's lemma, for each $n \in \mathbf{N}$ there exists a positive measure $\nu_n$ supported on $A_n \setminus X$ such that

\begin{enumerate}
    \item $\nu_n(B) \leq \epsilon_n r^{1+ \frac{\lambda-2}{p}}$ for all balls $B$ with radius $r$.
    \bigskip
    \item $\int \nu_n = C \epsilon_n M_*^{1+ \frac{\lambda-2}{p}}(A_n \setminus X)$.
\end{enumerate}

\bigskip
\noindent Let 

\begin{align*}
    f_n(z) = \int \left( \frac{\zeta}{|\zeta|}\right)^{t+1} \frac{dv_n(\zeta)}{\zeta-z}.
\end{align*}

\bigskip
\noindent It follows from Lemma \ref{lem1} that $f_n$ is analytic off $A_n$, $[f_n]_{\mathscr{L}^{p, \lambda}} \leq C \epsilon_n$ and $f_n \in A_{p, \lambda}(X)$. Moreover,

\begin{align*}
    f_n^{(t)}(0) = t! \int \frac{d\nu_n(\zeta)}{|\zeta|^{t+1}},
\end{align*}

\bigskip
\noindent and hence $f_n^{(t)}(0) \geq C 2^{n(t+1)} M_*^{1+ \frac{\lambda-2}{p}}(A_n \setminus X)$. For each $m \in \mathbb{N}$, choose $M$ so that

\begin{align*}
    1 \leq \sum_{n=m}^M 2^{n(t+1)} M_*^{1+ \frac{\lambda-2}{p}}(A_n \setminus X) \leq 2
\end{align*}

\bigskip
\noindent and let 

\begin{align*}
    g_m(z) = \sum_{n=m}^M f_n(z).
\end{align*}

\bigskip
\noindent It follows that there is a nonzero constant for which $g_m^{(t)}(0)$ is bounded below for all $m$. We wish to show that $||g_m||_{\mathscr{L}^{p,\lambda}(X)} \to 0$ as $m \to \infty$.

\bigskip

Let $B$ be a ball of radius $r$ contained in the annulus $A_k = \{z: 2^{-(k+1)} \leq |z| \leq 2^k\}$ and choose $f_n$ with $m \leq n \leq M$. Then it follows from Proposition 1 of Lemma \ref{lem1} that $[f_n]_{\mathscr{L}^{p, \lambda}} \leq C \epsilon_n$. However, if $k \neq n-1, n,$ or $n+1$, then 

\begin{align*}
       \left( \frac{1}{r^{\lambda}}  \int_B |f_n(z)|^p dA \right)^{\frac{1}{p}} \leq C \epsilon_n 2^{n(1+ \frac{\lambda-2}{p})} M_*^{1+ \frac{\lambda-2}{p}}(A_n \setminus X)
    \end{align*}

    \bigskip
    \noindent and 
    
    \begin{align*}
      \left(  \frac{1}{r^{\lambda}}  \int_B |f_{n,B}|^p dA \right)^{\frac{1}{p}}\leq C \epsilon_n 2^{n(1+ \frac{\lambda-2}{p})} M_*^{1+ \frac{\lambda-2}{p}} (A_n \setminus X).
    \end{align*}

\bigskip
\noindent Hence 

\begin{align*}
    [g_m]_{\mathscr{L}^{p,\lambda}} &\leq \sum_{n=m}^M [f_n]_{\mathscr{L}^{p,\lambda}} \\
    &\leq 3C \epsilon_m + \sum_{n=m}^M  \left( \frac{1}{r^{\lambda}}  \int_B |f_n(z)|^p dA \right)^{\frac{1}{p}} + \left(  \frac{1}{r^{\lambda}}  \int_B |f_{n,B}|^p dA \right)^{\frac{1}{p}}\\
    &\leq 3C \epsilon_m + C \sum_{n=m}^M \epsilon_n 2^{n(1+ \frac{\lambda-2}{p})} M_*^{1+ \frac{\lambda-2}{p}} (A_n \setminus X).
\end{align*}

\bigskip
\noindent In addition, by Proposition 4 of Lemma \ref{lem1} we have that

\begin{align*}
    ||g_m||_{p} &\leq \sum_{n=m}^{M} ||f_n||_p\\
    &\leq C \sum_{n=m}^M \epsilon_n 2^{n(1+ \frac{\lambda-2}{p})} M_*^{1+ \frac{\lambda-2}{p}} (A_n \setminus X).
\end{align*}

\bigskip
\noindent Therefore it follows that

\begin{align*}
    ||g_m||_{\mathscr{L}^{p, \lambda}} &\leq 3C \epsilon_m + C \sum_{n=m}^M \epsilon_n 2^{n(1+ \frac{\lambda-2}{p})} M_*^{1+ \frac{\lambda-2}{p}} (A_n \setminus X)\\
&\leq 3C \epsilon_m + C 2^{-m(t-\frac{\lambda-2}{p})} \sum_{n=m}^M \epsilon_n 2^{n(t+1)} M_*^{1+ \frac{\lambda-2}{p}} (A_n \setminus X)\\
&\leq 3C \epsilon_m + C 2^{-m(t-\frac{\lambda-2}{p})}.\\    
\end{align*}

\bigskip
\noindent Hence $||g_m||_{\mathscr{L}^{p, \lambda}} \to 0$ as $m \to \infty$ but $g_m^{(t)}(0)$ is bounded below by a nonzero constant for all $m$. Hence $A_{p,\lambda}$ does not admit a $t$-th order bounded point derivation at $0$, which proves the theorem when $p<2$.

We prove the case when $p \geq 2$ by using the fact that the Campanato space $\mathscr{L}^{p,\lambda}(X)$ coincides with the Campanato space $\mathscr{L}^{p_1,\lambda_1}(X)$ when $\frac{\lambda_1-2}{p_1} = \frac{\lambda-2}{p}$. Given $p\geq2$ and $\lambda>2$, choose $\lambda_1$ so that $2 + \frac{\lambda-2}{p} \leq \lambda_1 < 2+ \frac{2(\lambda-2)}{p}$ and let $p_1 = p \left(\frac{\lambda_1-2}{\lambda-2} \right)$. Then $1 \leq p_1 < 2$ and $\frac{\lambda-2}{p} = \frac{\lambda_1-2}{p_1}$. Thus the result of Theorem \ref{necessity} for the pair $p$ and $\lambda$ follows from the result with the pair $p_1$ and $\lambda_1$.

\bigskip

If $p\geq 2$ and $\lambda<2$, choose $\lambda_1$ so that $2+\frac{2(\lambda-2)}{p}< \lambda_1\leq 2 + \frac{\lambda-2}{p}$ and let $p_1 = p\left( \frac{2-\lambda_1}{2-\lambda}\right)$. Then $1 \leq p_1 < 2$ and $\frac{\lambda-2}{p} = \frac{\lambda_1-2}{p_1}$. Thus the result of Theorem \ref{necessity} for the pair $p$ and $\lambda$ follows from the result with the pair $p_1$ and $\lambda_1$.

\bigskip

When $\lambda = 2$, the Campanato space $\mathscr{L}^{p,2}(X)$ coincides with BMO$(X)$ for all $p$, so the result of Theorem \ref{necessity} for $p \geq 2$ follows from the result with $p<2$ in this case.

\end{proof}

\section{An Example}

Let $A_n = \{ z \in \mathbb{C} : \frac{1}{2^{n+1}} \leq \vert z \vert \leq \frac{1}{2^n}  \}$. For a given value of $n \in \mathbb{N}$, $A_n$ represents an annulus of the complex plane centered at the origin of the complex plane. From each annulus, an open disk is removed, with the constraint that the deleted disks may not sit on the edge of two annuli. More precisely, let $D_n$ denote the open disk deleted from $A_n$. A roadrunner set $X$ is defined as $X = \displaystyle \bigcup_{n=1}^{\infty} [ A_n \setminus D_n ]$. See Figure \ref{roadrunner}.

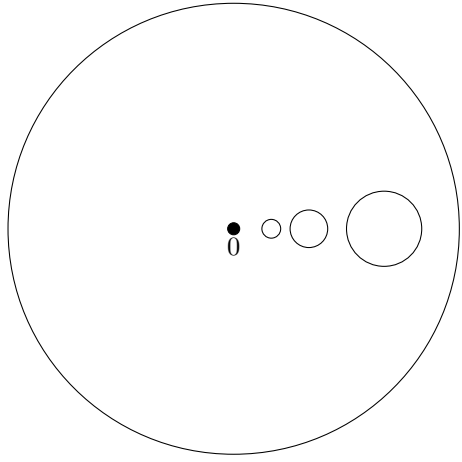
\begin{figure}[t]
\begin{center}
\begin{tikzpicture}[scale=1]

\tikzset{->-/.style={decoration={
  markings,
  mark=at position .5 with {\arrow[scale=2]{>}}},postaction={decorate}}}

   \draw[black] (0,0) circle (3cm);
   \draw [black](2,0) circle (.5cm);
   \draw [black](1,0) circle (.25cm);
   \draw [black](.5,0) circle (.125cm);

   \filldraw[fill=black, draw=black] (0,0) circle (.08 cm);
   \node [below] (x) at (0, 0){$0$};

\end{tikzpicture}
\caption{A roadrunner set}
\label{roadrunner}

\end{center}
\end{figure}

\begin{theorem}
There exists a roadrunner set $X$ such that $A_{p,\lambda}(X)$ admits a bounded point derivation at $0$ for every value of $\lambda$ and $p$, where $1 \leq p < \infty$, $ p-2 < \lambda < p + 2$, and $\lambda \geq 0$.
\end{theorem}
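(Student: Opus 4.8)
The plan is to read the statement straight off Theorem~\ref{main}, for which a roadrunner set is tailor‑made. Write $r_n$ for the radius of the disk $D_n$ deleted from $A_n$. Since the annuli $A_m$ are pairwise disjoint apart from their bounding circles and each $D_n$ sits in the interior of $A_n$, one has $A_n\cap X = A_n\setminus D_n$, hence $A_n(0)\setminus X = D_n$. As $D_n$ is a disk of radius $r_n$, the normalization $M_*^{\alpha}(B_r)=r^{\alpha}$ recorded in Section~3 gives $M_*^{1+\frac{\lambda-2}{p}}(A_n(0)\setminus X)=r_n^{\,1+\frac{\lambda-2}{p}}$. Thus, for any pair $(p,\lambda)$ admissible in Theorem~\ref{main} — the condition $2-p<\lambda<2+p$ forcing $\alpha:=1+\frac{\lambda-2}{p}\in(0,2)$ — the criterion for a (first order) bounded point derivation at $0$ reduces to
\[
\sum_{n=1}^{\infty} 2^{2n}\, r_n^{\,\alpha} < \infty .
\]
So the whole problem is to choose the radii $r_n$, subject only to $D_n$ lying strictly inside $A_n$, so that this series converges for \emph{every} $\alpha\in(0,2)$ simultaneously.

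The main point — really the only subtlety — is that this rules out any geometric choice of radii: for $r_n=2^{-\beta n}$ the series $\sum 2^{(2-\alpha\beta)n}$ converges iff $\beta>2/\alpha$, and $2/\alpha\to\infty$ as $\alpha\to0^+$, so no single $\beta$ serves all of $(0,2)$. The radii must decay faster than any exponential. I will take $r_n=2^{-(n^2+3)}$. Then $2r_n=2^{-(n^2+2)}<2^{-(n+1)}$ for all $n\ge1$ (as $n^2+2>n+1$), and $2^{-(n+1)}$ is exactly the radial width of $A_n$, so a disk of radius $r_n$ fits strictly inside $A_n$ and $X$ is a genuine roadrunner set. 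With this choice $2^{2n}r_n^{\alpha}=2^{\,2n-\alpha(n^2+3)}$, and for each fixed $\alpha>0$ the exponent $2n-\alpha(n^2+3)=-\alpha n^2+2n-3\alpha\to-\infty$, so the series converges for every $\alpha>0$, in particular for every $\alpha\in(0,2)$.

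To finish, fix $p$ and $\lambda$ as in the statement and put $\alpha=1+\frac{\lambda-2}{p}\in(0,2)$. By the previous paragraph
\[
\sum_{n=1}^{\infty} 2^{2n}\, M_*^{1+\frac{\lambda-2}{p}}\!\bigl(A_n(0)\setminus X\bigr)=\sum_{n=1}^{\infty} 2^{2n}\, r_n^{\,\alpha}<\infty,
\]
so Theorem~\ref{main} with $t=1$ yields that $A_{p,\lambda}(X)$ admits a bounded point derivation at $0$. One should also check the background hypotheses under which $\mathscr{L}^{p,\lambda}(X)$ is defined and Theorem~\ref{main} applies: every relatively open subset of the roadrunner set contains a two‑dimensional piece of some $A_m\setminus D_m$ (and near $0$ accumulates infinitely many of them), hence has positive area; and adjoining the origin to $X$ — harmless, since $0\notin A_n$ for any $n$, so no $A_n(0)\setminus X$ changes — makes $X$ compact with $0\in\partial X$. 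Finally, I would record the costless strengthening: the identical computation with $2^{(t+1)n}$ in place of $2^{2n}$ gives exponent $(t+1)n-\alpha(n^2+3)\to-\infty$ for every fixed $\alpha>0$, so this one set $X$ in fact admits a $t$-th order bounded point derivation at $0$ for every non‑negative integer $t$ and every admissible $(p,\lambda)$.
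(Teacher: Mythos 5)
Your proof is correct and takes essentially the same approach as the paper: choose the deleted radii to decay faster than any exponential so that $\sum 2^{(t+1)n} r_n^{\alpha}$ converges for every admissible exponent $\alpha=1+\frac{\lambda-2}{p}$ simultaneously (the paper uses $r_n=1/n!$ with the ratio test, you use $r_n=2^{-(n^2+3)}$ with a direct tail estimate). Your version is in fact slightly more careful, since you verify that each deleted disk actually fits inside its annulus and that $X\cup\{0\}$ is compact with every relatively open subset of positive area, points the paper's choice of radii glosses over.
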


\begin{proof}

    Let $r_n = \frac{1}{n!}$, where $r_n$ is the radius of each deleted disk $D_n$, and let $\epsilon > 0$ be arbitrary. It is known that $A_{p,\lambda}(X)$ admits a bounded point derivation at $0$ if $\displaystyle \sum_{n=1}^{\infty} 4^n  M_*^{1+\frac{\lambda-2}{p}}(A_n(x) \setminus X) < \infty$. From the ratio test, it follows that if
\begin{align*}
  \lim_{n \to \infty} \left\vert \frac{ 4^{n+1} M_*^{ \epsilon} (A_{n+1}(x) \setminus X) }{ 4^n M_*^{ \epsilon} (A_n(x) \setminus X)} \right\vert < 1 ,
\end{align*}
 then $A_{p,\lambda}(X)$ admits a bounded point derivation at $0$ for every value of $\lambda$ and $p$. Since $\displaystyle \lim_{n \to \infty} \frac{1}{n + 1} = 0$, then it follows that $\displaystyle \lim_{n \to \infty} \frac{1}{(n + 1)^{\epsilon}} = 0$. For the given roadrunner set, it follows from the ratio test that 
\begin{align*}
    \lim_{n \to \infty} \left\vert \frac{4^{n+1} (M_*^{\epsilon} (A_{n+1}(x) \setminus X)}{4^n (M_*^{\epsilon} (A_n(x) \setminus X)} \right\vert = \lim_{n \to \infty} \left\vert \frac{ 4 r_{n + 1}^{\epsilon}}{{r_n}^{\epsilon}} \right\vert = \lim_{n \to \infty} \left\vert \frac{4 (n!)}{ (n+1)!} \right\vert = \lim_{n \to \infty} \left\vert \frac{4}{n+1} \right\vert = 0.
\end{align*}
Since the ratio test yields 0 for any arbitrary positive number $\epsilon$, regarding the given roadrunner set, this implies that $\displaystyle \sum_{n=1}^{\infty} 4^n  M_*^{1+\frac{\lambda-2}{p}}(A_n(x) \setminus X) < \infty$ for every value of $p$ and $\lambda$.

\end{proof}

\end{document}